\newcommand{\Z}{{\mathbb Z}}
\newcommand{\QQ}{{\mathbb Q}}
\newcommand{\cC}{{\mathcal C}}
\newcommand{\cH}{{\mathcal H}}
\newcommand{\Q}{{\mathbb Q^*}}
\newcommand{\s}{{\mathrm S}}
\newcommand{\h}{{\mathrm h}}
\newtheorem{theorem}{Theorem}[section]
\newtheorem{problem}[theorem]{Problem}
\newtheorem{lemma}[theorem]{Lemma}
\newtheorem{definition}[theorem]{Definition}
\newtheorem{nota}[theorem]{ Notation}
\newtheorem{constr}[theorem]{\bf Construction}
\newtheorem{cor}[theorem]{\bf Corollary}
\newtheorem{obs}[theorem]{\bf Observation}
\newtheorem{prop}[theorem]{\bf Proposition}
\newtheorem{conj}[theorem]{\bf Conjecture}
\theoremstyle{definition} 
\newtheorem{defi}[theorem]{\bf Definition}
\title{Maximizing the number of rational-value sums or zero-sums}
\author{Benjamin Móricz \footnote{ ELTE Eötvös Loránd University, Budapest, Hungary. E-mail: {\tt moriczben@student.elte.hu}} \quad Zoltán Lóránt Nagy \footnote{ELTE Linear Hypergraphs  Research Group, ELTE Eötvös Loránd University, Budapest, Hungary.  The author is supported by the János Bolyai Scholarship of the Hungarian Academy of Sciences, and the University Excellence Fund of Eötvös Loránd University. E-mail: {\tt nagyzoli@cs.elte.hu}}}
\date{}
\begin{document}
	
	\maketitle
	
	\begin{abstract}
		What is the maximum number of $r$-term sums admitting rational values in  $n$-element sets of irrational numbers? We determine the maximum when $r<4$ or $r\geq n/2$ and also in case when we drop the condition on the number of summands. It turns out that the $r$-term sum problem is equivalent to determine  the maximum number of $r$-term zero-sum subsequences in $n$-element sequences of  integers, which can be seen as a variant of the famous Erdős-Ginzburg-Ziv theorem.\\
	\textit{	Keywords:} zero-sum theorems, Erdős-Ginzburg-Ziv, sumset sums, Sperner-theorem, irrational numbers
	\end{abstract}
	
	\section{Introduction}
	
	Extremal graph theoretic problems usually seek the maximum (minimum) number of edges or hyperedges of an $n$-vertex  graph or hypergraphs which fulfil certain constraints, such as not containing certain subgraphs in the case of Turán type problems. 
	We address such an extremal graph theoretic problem with an underlying algebraic structure: what is the maximum number of $k$-term sums admitting rational values in an $n$-element sets of irrational numbers. 
	
	More formally, we propose the following problem. Let $\Q:=\mathbb R\setminus \QQ$ denote the set of irrational numbers.
	
	\begin{problem} \label{elsoproblema}
		Let $A$ denote a set of $n$ irrational numbers and $r>1$ a positive integer.  Let $\cH(A,r)$ denote the set of $r$-element subsets $A'\subseteq A$ for which $\sum_{x\in A'} x\in \mathbb{Q}$. Determine $$\h(n,r):=\max_{A\in \Q^n} |\cH(A,r)|.$$
	\end{problem}
	
	Several extremal problems have been investigated before where the underlying set is a set of numbers and (hyper)edges are corresponding to certain sums. We mention here the celebrated Manickam-Miklós-Singhi theorem \cite{Miklos, Singhi}, and the recent progress on the related conjecture \cite{stability_Alon, Alexey}.  Recall that this conjecture states that for positive integers $n, k$,  every set of $n$ real numbers with non-negative sum has at least $\binom{n-1}{k-1}$ $k$-element subsets whose sum is also non-negative, provided that  $n \geq 4k$. \\
	Another related area is zero-sum problems, see the excellent surveys of Caro \cite{Caro} and of Gao and Geroldinger \cite{Gao}. We return to the link to this area at the end of the paper, but we point out here that Problem \ref{elsoproblema} is equivalent to another very natural additive combinatorial question, formulated below. Their equivalence will be proved in Section 2.
	
	\begin{problem}\label{elsoprob_variant} Given a sequence of $n$ nonzero integer (or rational) numbers, what is the maximum number
		of $r$-term zero-sum subsequences?
	\end{problem}

	It is very natural to pose similar questions for general group setting as well, in the spirit of the well known Davenport constant and Erdős-Ginzburg-Ziv constant, see \cite{egz1, Gao, egz2} and the references therein. 
	
	\begin{problem}\label{main_genp}
		Let $G$ be an  additive  group and  $k<n$ integer numbers. What is the maximum number of $k$-term subsequences of an $n$-element sequence of nonzero elements of $G$, which has zero sum?  
	\end{problem}

	As a matter of fact, motivated by  the statement of the Erdős-Ginzburg-Ziv theorem, Bialostocki raised the following question.
	
	\begin{conj}[Bialostocki, \cite{Bia}]\label{biaconj}
		Let $A = \{a_1, a_2, \ldots , a_n\}$ be a sequence of integers.
		Then $A$ contains at least $\binom{\lfloor n/2 \rfloor}{m} +\binom{\lceil n/2 \rceil}{m}$  zero-sum subsequences $\pmod m$ of length $m$.
	\end{conj}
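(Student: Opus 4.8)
The plan is to separate the (easy) tight construction from the (hard) universal lower bound, and then to recast the latter as a finite extremal problem about a symmetric function. For the construction, take the sequence consisting of $\lfloor n/2\rfloor$ copies of $0$ together with $\lceil n/2\rceil$ copies of $1$. A length-$m$ subsequence containing $k$ ones has sum $k\pmod m$, which vanishes only for $k\in\{0,m\}$, so this sequence has exactly $\binom{\lfloor n/2\rfloor}{m}+\binom{\lceil n/2\rceil}{m}$ zero-sum subsequences of length $m$ (any residue $v$ with $\gcd(v,m)=1$ would serve equally well, while a non-coprime $v$ can only increase the count). Hence it remains to prove that \emph{every} length-$n$ integer sequence has at least this many zero-sum length-$m$ subsequences. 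Note that for $n\le 2m-2$ the claimed bound is $0$ and there is nothing to prove, while for $n=2m-1$ it equals $1$ and the statement is precisely the Erdős-Ginzburg-Ziv theorem; so the real content lies in the range $n\ge 2m$, and the conjecture should be seen as a counting refinement of EGZ.

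Next I would reduce to a finite optimization. Sort the terms of an arbitrary sequence $A$ according to their residue modulo $m$, with class sizes $n_0,\dots,n_{m-1}$, $\sum_i n_i=n$. The number $N_0(A)$ of zero-sum length-$m$ subsequences depends only on $(n_0,\dots,n_{m-1})$: it equals $\sum\prod_{i}\binom{n_i}{j_i}$, the sum ranging over weak compositions $j_0+\dots+j_{m-1}=m$ with $\sum_i i\,j_i\equiv0\pmod m$; equivalently, by a roots-of-unity filter, $N_0=\tfrac1m\bigl(\binom nm+\sum_{t=1}^{m-1}e_m(\omega^{ta_1},\dots,\omega^{ta_n})\bigr)$, where $\omega=e^{2\pi i/m}$ and $e_m$ is the $m$-th elementary symmetric polynomial. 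Either way, the conjecture becomes the purely combinatorial assertion that this explicit symmetric function of $(n_0,\dots,n_{m-1})$, subject to $\sum_i n_i=n$, is minimized by putting all the mass on class $0$ and one class $v$ with $\gcd(v,m)=1$, split as evenly as possible.

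I would attack this minimization by a smoothing argument: starting from a minimizer supported on three or more residue classes, merge a suitably chosen pair of classes — or move one unit of mass from one class to another — in such a way that $N_0$ does not increase, and iterate until only two classes remain; the resulting two-class configuration is then optimized by convexity of $k\mapsto\binom km$ (this last step is literally the $m=2$ computation $N_0=\binom{n_0}2+\binom{n_1}2$). The cases $m=2$ and $m=3$ drop out immediately from the displayed formula ($N_0=\binom{n_0}3+\binom{n_1}3+\binom{n_2}3+n_0n_1n_2$ for $m=3$, where the cross term forces the mass off a third class), and the boundary values $n=2m$ and $n=2m+1$ can be settled by producing one zero-sum block via EGZ and then manufacturing a second by an exchange of residues.

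The main obstacle is making the merging step go through for general $m$: the function $N_0$ is a sum of many multinomial products, and it is far from transparent which two classes should be merged, nor that such a merge is monotone; through the Fourier form this is equivalent to a sharp lower bound on $\mathrm{Re}\,e_m$ for $n$ points of $\mu_m$ on the unit circle, and controlling elementary symmetric functions of roots of unity is exactly the delicate point — which is presumably why the statement is still only conjectural in full strength. An induction on $n$ is a natural alternative (delete one term and split according to whether it is used), but then one is forced to track the whole vector $(N_0,\dots,N_{m-1})$ of residue counts, whose extremal shape is less clean, and the same structural difficulty reappears there.
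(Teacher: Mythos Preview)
The paper does not prove this statement: it is quoted as Bialostocki's \emph{conjecture}, and the surrounding discussion only reports partial results due to Kisin, F\"uredi--Kleitman, and Grynkiewicz. There is therefore no ``paper's own proof'' to compare your proposal against.

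As for the proposal itself, you are candid that it is not a proof but a plan, and you correctly identify where it breaks down. The construction and the reductions (residue-class vector, roots-of-unity filter, the trivial ranges $n\le 2m-1$) are all standard and fine. The genuine gap is exactly the one you name: the ``merge two classes'' smoothing step is not known to be monotone for general $m$, and no choice of which pair to merge has been shown to work. Your alternative, inducting on $n$ while tracking the full vector $(N_0,\dots,N_{m-1})$, runs into the same wall. So what you have written is an honest outline of why the problem is hard, not a proof; since the paper treats the statement as open, that is consistent with the paper, but you should not present this as a proof proposal for a result the paper establishes.
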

	
	This has been confirmed  by Kisin \cite{Kisin} in the  case  $m$ is a prime power $p^h$, or of form $p^h\cdot q$ for primes $p,q$ and by Füredi and Kleitman  when $m$ is either of form $m=pq$ or when $n$ is very large compared to $m$ \cite{Furedi}. Later Grynkiewicz proved the conjecture when $n$ is not very large compared to $m$, more precisely, in the case $n<\frac{19}{3}m$ \cite{Gryn}.\\
	Hence, Conjecture \ref{biaconj} asks for the \textit{minimum} of these, while  Problem \ref{main_genp} seeks for the \textit{maximum number of zero-sums}, complementing the former Erdős-Ginzburg-Ziv type question.
	
	\begin{nota} 
		We use the standard additive combinatorial notation for the sumset $A+B$ of two subsets $A$ and $B$ of an abelian group $G$ as the set of all sums $A + B = \{a+b : a \in A, b \in B\}$. If a set consists of a single element, we omit the brackets for brevity.
	\end{nota}
	
	Our aim is to give exact solutions to Problem \ref{elsoprob_variant} for various values of $r$. As it will turn out, this answers the former problem as well, Problem \ref{elsoproblema}, which is also motivated from extremal graph theory.
	
	The case $r=2$ of Problem \ref{elsoproblema}, corresponding to ordinary graphs with edges assigned to rational sums $a_i+a_j$, is a folklore 
	exercise. Here one can easily show that odd cycles are not contained in the graph induced by the set of irrational numbers, thus the maximum can not exceed the maximum number of edges of bipartite graphs on $n$ vertices. On the {other hand}, consider a set $A=A_1\cup A_2$ such that $A_1\subset \mathbb{Q}+\alpha$, $A_2\subset \mathbb{Q}-\alpha$, with $||A_1|-|A_2||\leq 1$. This construction implies that  $\max_A |\cH(A,2)|=\lfloor\frac{n^2}{4}\rfloor$. Note that the result can be deduced from Mantel's theorem as well.
	

	Our first contribution is the complete solution for the case $r=3$.
	
	\begin{theorem}\label{r=3} 
		Let $n$ be a natural number of form $n=3k+\ell$ with  $0\leq \ell<2$.
		Then the maximum number of rational $3$-term sums in $n$-element irrational sets is $$\h(n,3)=k\cdot \binom{2k+\ell}{2}.$$
	\end{theorem}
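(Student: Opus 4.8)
The plan is to prove the stated value by an explicit construction for the lower bound and a reduction to a finite weighted optimization for the matching upper bound.

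\emph{Lower bound.} Fix an irrational $\alpha$ and choose $2k+\ell$ distinct rationals $q_1,\dots,q_{2k+\ell}$ and $k$ further distinct rationals $q'_1,\dots,q'_k$. Put $A=A_1\cup A_2$ with $A_1=\{q_i+\alpha\}$ and $A_2=\{q'_j-2\alpha\}$; this is an $n$-element set of irrationals since $n=3k+\ell=(2k+\ell)+k$ and the two parts are disjoint (else $3\alpha\in\QQ$). The rationality of $\sum_{x\in A'}x$ depends only on the multiset of ``$\alpha$-coefficients'' of $A'$, which is $+1$ for each element of $A_1$ and $-2$ for each element of $A_2$; a triple has rational sum exactly when these three coefficients sum to $0$, and since $1+1-2=0$ while $3,-3,-6\neq 0$, this happens precisely for the triples with two elements in $A_1$ and one in $A_2$. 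Hence $|\cH(A,3)|=\binom{2k+\ell}{2}k$, so $\h(n,3)\ge k\binom{2k+\ell}{2}$.

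\emph{Upper bound, setup.} Let $A$ be arbitrary and partition it by $x\sim y\iff x-y\in\QQ$; write the classes as $C_1,\dots,C_m$ with nonzero residues $v_1,\dots,v_m\in\R/\QQ$ and sizes $n_1,\dots,n_m$, $\sum n_i=n$. Whether $\{x,y,z\}$ has rational sum depends only on the classes of $x,y,z$: as $\R/\QQ$ is a torsion-free $\QQ$-vector space, three elements of a single class never work ($3v_i\neq 0$); two from $C_i$ and one from $C_j$ work iff $v_j=-2v_i$; one from each of three distinct classes works iff $v_i+v_j+v_k=0$. Writing $\sigma(i)$ for the index with $v_{\sigma(i)}=-2v_i$ when it exists, this gives the exact formula
\[
|\cH(A,3)|\;=\;\underbrace{\sum_{\{i,j,k\}:\,v_i+v_j+v_k=0} n_i n_j n_k}_{T_1}\;+\;\underbrace{\sum_{i:\,-2v_i\in\{v_1,\dots,v_m\}}\binom{n_i}{2}\,n_{\sigma(i)}}_{T_2},
\]
and the task is to show $T_1+T_2\le k\binom{2k+\ell}{2}$ for every admissible choice of the $v_i$ and $n_i$.

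\emph{The optimization, and the obstacle.} The map $i\mapsto\sigma(i)$ is injective and cycle-free (a cycle would force $((-2)^t-1)v_i=0$), so the classes contributing to $T_2$ split into vertex-disjoint directed paths; along a path $u_1\to\cdots\to u_s$ the contribution is $\sum_r\binom{n_{u_r}}{2}n_{u_{r+1}}$, which one checks is maximized by collapsing the whole path weight onto one edge, and since the two-class value $\max_{a+b=W}\binom{a}{2}b$ is superadditive in $W$, this already yields $T_2\le k\binom{2k+\ell}{2}$ on its own; likewise each class lies in at most $\lfloor(m-1)/2\rfloor$ zero-sum triples (the pairs completing a given class to such a triple are vertex-disjoint), which keeps $T_1$ alone well below the target. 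What remains, and what I expect to be the main difficulty, is the interaction: a class may simultaneously be an endpoint of a doubling path and a vertex of several zero-sum triples, so redistributing its weight affects $T_2$ convexly but $T_1$ concavely, and no single local move is monotone. The plan is a smoothing argument — a configuration with $m\ge 3$ active classes can be modified (by moving all weight off a well-chosen class, or coalescing two classes) without decreasing $T_1+T_2$ until $m\le 2$ — but one must pick the move according to whether $T_1$ or $T_2$ currently dominates, explicitly rule out the few ``mixed'' small configurations (a short chain, a zero-sum triangle, a triangle with an attached doubling edge, two disjoint doubling pairs) as potential maximizers, and finally, once $m\le 2$ so that $T_1=0$ and $T_2=\binom{n_1}{2}n_2$ (or $0$), invoke the unimodality of $c\mapsto\binom{n-c}{2}c$ to locate the optimum at $(n_1,n_2)=(2k+\ell,k)$, together with a check of the small-$n$ boundary cases. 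I anticipate that the bulk of the write-up is precisely this case analysis of mixed configurations.
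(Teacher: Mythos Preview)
Your lower bound is fine and matches the paper's. The upper bound, however, is left as an announced case analysis that you have not actually carried out; as written this is not a proof but a plan, and the interaction between $T_1$ and $T_2$ that you flag is real and would take genuine work to dispatch.

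More to the point, the paper avoids this entire difficulty by one additional reduction step that you stop just short of. You partition $A$ by the classes $v_i\in\R/\QQ$, but $\R/\QQ$ is a $\QQ$-vector space, so one can choose a $\QQ$-linear functional $\phi:\langle A\cup\{1\}\rangle\to\R$ with $\ker\phi=\QQ$ (the paper's ``core homomorphism'', Lemma~\ref{key_homo}). Under $\phi$ every rational $3$-sum becomes an honest zero-sum of three nonzero reals, and now each element has a \emph{sign}. Any zero-sum triple must contain at least one positive and one negative element; a fixed positive--negative pair determines the third element uniquely; and each triple contains two such pairs. If $p\ge n-p$ are the counts of positive and negative elements, this gives at once
\[
|\cH(A,3)|\;\le\;\frac{p(n-p)\max\{p-1,n-p-1\}}{2}\;=\;\binom{p}{2}(n-p),
\]
which is maximized at $p=n-\lfloor n/3\rfloor$, yielding exactly $k\binom{2k+\ell}{2}$. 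No smoothing, no mixed configurations, no case analysis: the sign trick collapses your $T_1$ and $T_2$ into a single counting argument of two lines. The moral is that working in the quotient $\R/\QQ$ retains too much structure; projecting further to a one-dimensional image recovers an order, and the order is what makes the count immediate.
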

	
	Next we present the exact result for $r\geq\frac{n}{2}$.
	\begin{theorem}\label{nagyr}
		Let $n/2<r$ hold for $n, r\in \mathbb{N}$.
		Then the maximum number of rational $r$-term sums is $\h(n,r)= \binom{n-1}{r-1}$.\\
		Suppose that  $n/2=r$ holds.
		Then $\h(n,\frac{n}{2})= 2\binom{n-2}{r-1}$.
	\end{theorem}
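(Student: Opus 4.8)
By the equivalence proved in Section~2 I argue with a sequence $\underline a=(a_1,\dots,a_n)$ of nonzero integers (equivalently reals); let $\cF$ be the family of its zero-sum $r$-term subsets, $Z=|\cF|$, and $\sigma=\sum_i a_i$. The lower bounds come from explicit sequences. For $r>n/2$ take $a_1=\dots=a_{n-1}=1$ and $a_n=-(r-1)$: an $r$-subset avoiding $a_n$ sums to $r\neq0$, while $a_n$ together with any $r-1$ of the ones sums to $0$, so $Z=\binom{n-1}{r-1}$. For $r=n/2$ take $a_1=\dots=a_{n-2}=1$ and $a_{n-1}=a_n=-(r-1)$: an $r$-subset meeting $\{a_{n-1},a_n\}$ in $j$ points sums to $r(1-j)$, which vanishes only for $j=1$, so $Z=2\binom{n-2}{r-1}$. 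In the irrational phrasing these are $n-1$ (resp.\ $n-2$) distinct reals in $\alpha+\QQ$ together with one (resp.\ two) distinct reals in $-(r-1)\alpha+\QQ$. Hence $\h(n,r)\ge\binom{n-1}{r-1}$ for $r>n/2$ and $\h(n,n/2)\ge2\binom{n-2}{r-1}$.

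\textbf{Upper bound: reductions.} If all $a_i$ have the same sign then $Z=0$, so assume both signs occur. If $r=n/2$ and $\sigma\neq0$ the complement of a zero-sum half-subset has sum $\sigma\neq0$, so $\cF$ and $\{[n]\setminus A:A\in\cF\}$ are disjoint subfamilies of $\binom{[n]}{n/2}$, giving $Z\le\tfrac12\binom{n}{n/2}=\binom{n-1}{n/2-1}<2\binom{n-2}{n/2-1}$. Thus it remains to prove $Z\le\binom{n-1}{r-1}$ for $r>n/2$, and $Z\le 2\binom{n-2}{r-1}$ for $r=n/2$ with $\sigma=0$; the case $r=n$ is trivial.

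\textbf{Upper bound: inductive core.} Induct on $n$. Deleting an entry $a_e$ splits $\cF$ into the zero-sum $r$-subsets avoiding $e$ — a genuine $(n-1)$-instance with the same $r$, still $>\tfrac{n-1}{2}$, so of size at most $\binom{n-2}{r-1}$ by induction — and those through $e$, counted by $\deg(e)$, the number of $(r-1)$-subsets $T$ of $[n]\setminus e$ with $\sum_{i\in T}a_i=-a_e$. Since $\binom{n-1}{r-1}=\binom{n-2}{r-2}+\binom{n-2}{r-1}$, it suffices to exhibit one $e$ with $\deg(e)\le\binom{n-2}{r-2}$. For $r=n/2$, $\sigma=0$ this is immediate: $\cF$ is complement-closed, so $\deg(e)$ equals the deletion count and each equals $Z/2\le\binom{n-2}{r-1}$ by induction. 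For $r>n/2$, $\sigma=0$ we have $\sum_{i\in[n]\setminus e}a_i=-a_e$, so the sets counted by $\deg(e)$ are exactly the complements, inside $[n]\setminus e$, of the zero-sum $(n-r)$-subsets of $[n]\setminus e$; induction applies to these, and the bounds match up except at the boundary $r=\lceil n/2\rceil$, which is treated separately using the refined $2\binom{\cdot-2}{\cdot-1}$ estimate.

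\textbf{The main obstacle.} The genuinely hard case is $r>n/2$ with $\sigma\neq0$: now the link of $e$ is a \emph{fixed-sum}, not a zero-sum, family in $[n]\setminus e$, so induction does not directly bound $\deg(e)$, and the obvious fixes fail — the multiset structure of the extremal sequence ($n-1$ equal entries) makes $\cF$ a union of value-classes closed under transposing equal entries, so the ``swap in $e$'' injection collides, while averaging (using $\sum_e\deg(e)=rZ$ and $\sum_e a_e\deg(e)=0$) only yields $Z\le\tfrac{n}{n-1}\binom{n-1}{r-1}$. The plan is to choose $e$ via the sign pattern and the value multiplicities: after reducing to the case where $\sigma$ is not itself one of the values $a_i$ (splitting off such an entry recentres the sequence), pick $e$ of extremal absolute value so that the shifted target $-a_e$ is realised by at most $\binom{n-2}{r-2}$ of the $(r-1)$-subsets of the remaining, necessarily less spread-out, entries. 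Equivalently, one proves the sharp dual statement: among the $\binom{n}{s}$ subsets of size $s=n-r<n/2$, at most $\binom{n-1}{s}=\binom{n-1}{r-1}$ have sum $\sigma$. This anti-concentration bound, in the mild regime $r/n\ge\tfrac12$ but with multiset inputs, is the technical heart; once it is in place the induction closes and equality throughout traces back to the constructions above.
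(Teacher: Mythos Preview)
Your proposal is not a proof: you explicitly stop at the point where the real work begins. The ``sharp dual statement'' you isolate --- that among the $s$-subsets of a sequence of $n$ nonzero reals (not all equal), at most $\binom{n-1}{s}$ have a prescribed sum when $s<n/2$ --- is precisely the content of the theorem, and you offer only a heuristic (``pick $e$ of extremal absolute value'') rather than an argument. The suggested plan does not obviously work: the extremal sequence has $n-1$ equal entries, so there is no useful spread to exploit, and you yourself note that both the swap-injection and the averaging bound $Z\le\frac{n}{n-1}\binom{n-1}{r-1}$ fall short. Your treatment of the case $\sigma=0$, $r>n/2$ is also incomplete: you rewrite $\deg(e)$ as the number of zero-sum $(n-r)$-subsets of $[n]\setminus\{e\}$, but for $r>\lceil n/2\rceil$ one has $n-r<(n-1)/2$, which is outside the inductive range, and complementing inside $[n]\setminus\{e\}$ just returns you to a fixed-sum count --- the very obstacle you have not resolved.

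The paper closes exactly this gap by introducing the auxiliary quantity $g(n,r)$, which maximises the number of $r$-term sums hitting a \emph{prescribed} coset $\QQ+\alpha$ (subject to $A\not\subset\QQ+\alpha/r$). The point of allowing an arbitrary target is that complementation becomes an identity, $g(n,r)=g(n,n-r)$, so the large-$r$ problem is literally the small-$r$ problem. One then proves $g(n,r)=\binom{n-1}{r}$ for $r<n/2$ by induction on $r$ (not on $n$): an averaging/double-counting inequality $g(n,r)\le g(n-1,r)\cdot\frac{n}{n-r}$ handles all $n>2r+3$, and the two boundary values $n=2r+2$ and $n=2r+3$ are treated by hand, the latter via a short pigeonhole argument on two complementary families of $(r+1)$-sets. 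Your reformulation is equivalent to this $g$-bound, so to complete your approach you would in effect have to reproduce that argument; as written, the proposal identifies the target but does not hit it.
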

	
	In general, we conjecture that the lower bound below is tight.
	\begin{theorem}\label{mainconst}
		Let $n/2>r$ hold for $n, r\in \mathbb{N}$.
		Then  $\h(n,r)\geq \binom{n-\lfloor\frac{n}{r} \rfloor}{r-1}\cdot \lfloor\frac{n}{r} \rfloor$ holds.
	\end{theorem}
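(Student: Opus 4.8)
\emph{Proof plan.} The plan is to prove this lower bound by an explicit construction, exhibiting an $n$-element set $A\subseteq\R\setminus\QQ$ with $|\cH(A,r)|=\lfloor n/r\rfloor\binom{n-\lfloor n/r\rfloor}{r-1}$. Write $t:=\lfloor n/r\rfloor$. The shape of the target is dictated by the equivalent zero-sum formulation (Problem~\ref{elsoprob_variant}): the integer sequence with $n-t$ entries equal to $1$ and $t$ entries equal to $-(r-1)$ has exactly $t\binom{n-t}{r-1}$ zero-sum $r$-subsequences, namely the ones using a single $-(r-1)$ together with $r-1$ copies of $1$. The only subtlety of Problem~\ref{elsoproblema} is that its $n$ numbers must be \emph{distinct} irrationals, which I would arrange by spreading each of these two ``values'' over a full coset of $\QQ$ in $\R$.

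Concretely, fix an irrational $\alpha$, pick pairwise distinct rationals $q_1,\dots,q_{n-t}$ and pairwise distinct rationals $q'_1,\dots,q'_t$, and set
$$A \;:=\; \{\, q_j+\alpha \;:\; 1\le j\le n-t \,\} \;\cup\; \{\, q'_i-(r-1)\alpha \;:\; 1\le i\le t \,\}.$$
The first step is to check that $A$ is admissible. Each listed number is irrational (for the second block this uses $r\ge 2$, so that $(r-1)\alpha\notin\QQ$); the $n-t$ numbers of the first block are distinct and lie in the coset $\alpha+\QQ$; the $t$ numbers of the second block are distinct and lie in $-(r-1)\alpha+\QQ$; and these two cosets are different because $r\alpha\notin\QQ$. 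Hence $|A|=n$. Here I would also record that $n>2r$ forces $t\ge 2$ and $n-t\ge r-1$, so that $\binom{n-t}{r-1}$ is a positive binomial coefficient and the construction is non-degenerate.

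The core step is a one-line count. Let $A'\subseteq A$ be an arbitrary $r$-subset, containing $\beta$ members of the second block and $r-\beta$ of the first. Then
$$\sum_{x\in A'} x \;=\; \bigl(\text{a rational number}\bigr)\;+\;\bigl[(r-\beta)-\beta(r-1)\bigr]\alpha \;=\; \bigl(\text{a rational number}\bigr)+r(1-\beta)\,\alpha,$$
which, since $\alpha\notin\QQ$ and $r\ge 1$, is rational if and only if $\beta=1$. Thus $\cH(A,r)$ is exactly the set of $r$-subsets consisting of one element of the second block and $r-1$ elements of the first, and there are precisely $t\binom{n-t}{r-1}$ of those. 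This gives $\h(n,r)\ge|\cH(A,r)|=\lfloor n/r\rfloor\binom{n-\lfloor n/r\rfloor}{r-1}$, as claimed.

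I do not expect a genuine obstacle here: once the construction is written down, the displayed identity makes the count immediate, and the only place that needs a little care is the coset bookkeeping guaranteeing that $A$ consists of $n$ distinct irrational numbers. If one prefers to argue entirely within Problem~\ref{elsoprob_variant}, the two-value integer sequence above already does the job, and the coset construction above is simply a transparent way of realizing it by distinct irrationals. It is also worth noting, as motivation for the accompanying conjecture on tightness, that the same family of constructions with $b$ elements in the second block produces $b\binom{n-b}{r-1}$ rational $r$-sums, and that $b=\lfloor n/r\rfloor$ is an optimal choice of $b$ within this family.
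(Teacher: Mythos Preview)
Your proof is correct and matches the paper's approach exactly: the paper presents the very same two-coset construction (Construction~\ref{mainConst}), with $\lfloor n/r\rfloor$ elements from $\QQ-(r-1)\alpha$ and $n-\lfloor n/r\rfloor$ elements from $\QQ+\alpha$, and records the resulting lower bound without spelling out the $\beta=1$ verification that you supply.
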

	
	It is also natural to study the non-uniform version of Problem \ref{elsoproblema}.
	
	\begin{problem} \label{nemuniform}
		Let $A$ denote a set of $n$ irrational numbers.  Let $\cH(A)$ denote the set of  subsets $A'\subseteq A$ for which $\sum_{x\in A'} x\in \mathbb{Q}$. Determine $\h(n):=\max_{A\in \Q^n} |\cH(A)|$.
	\end{problem}

	\begin{theorem}\label{osszes}  $\h(n)={n\choose {\lfloor \frac{n}{2} \rfloor}}$
		is the maximum number of rational sums in $n$-element sets of irrationals.    
	\end{theorem}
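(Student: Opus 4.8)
The plan is to establish the matching bounds $\h(n)\le\binom{n}{\lfloor n/2\rfloor}$ and $\h(n)\ge\binom{n}{\lfloor n/2\rfloor}$, the upper bound being the substantial part.

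For the lower bound I would reuse the idea behind the $r=2$ construction: fix an irrational $\alpha$, take a block $A_1$ of $\lceil n/2\rceil$ numbers of the form (distinct rational)$+\alpha$ and a block $A_2$ of $\lfloor n/2\rfloor$ numbers of the form (distinct rational)$-\alpha$; all $n$ elements are distinct and irrational. A subset using $a$ elements from $A_1$ and $b$ from $A_2$ has sum of the form (rational)$+(a-b)\alpha$, which is rational exactly when $a=b$. Summing $\binom{\lceil n/2\rceil}{a}\binom{\lfloor n/2\rfloor}{a}$ over $a\ge 0$ and applying the Vandermonde convolution (after rewriting $\binom{\lfloor n/2\rfloor}{a}=\binom{\lfloor n/2\rfloor}{\lfloor n/2\rfloor-a}$) gives exactly $\binom{n}{\lfloor n/2\rfloor}$ rational sums, so $\h(n)\ge\binom{n}{\lfloor n/2\rfloor}$.

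For the upper bound, given an arbitrary set $A=\{a_1,\dots,a_n\}$ of irrationals, I would first pass to a finite-dimensional coordinate model. Inside the $\QQ$-vector space $W=\mathrm{span}_{\QQ}\{1,a_1,\dots,a_n\}$, pick a $\QQ$-basis whose first element is $1$, write each $a_i$ in this basis, and delete its first coordinate; this produces vectors $v_1,\dots,v_n\in\QQ^{d}$, all nonzero precisely because each $a_i$ is irrational, with the property that $\sum_{i\in S}a_i\in\QQ$ if and only if $\sum_{i\in S}v_i=0$. So it suffices to bound the number of zero-sum subsets of $n$ nonzero vectors in $\QQ^{d}$. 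Here I would order $\QQ^{d}$ lexicographically (making it a totally ordered abelian group), set $N=\{i:v_i<0\}$, and replace each $v_i$ with $i\in N$ by $-v_i$, obtaining $w_1,\dots,w_n$ all positive. The identity $\sum_{i\in S}v_i=\sum_{i\in S\triangle N}w_i-\sum_{i\in N}w_i$ together with the fact that $S\mapsto S\triangle N$ is a bijection of $2^{[n]}$ turns the zero-sum sets for $(v_i)$ into exactly the sets $T$ with $\sum_{i\in T}w_i=c$, where $c=\sum_{i\in N}w_i$ is fixed. That family is an antichain: a proper containment $T_1\subsetneq T_2$ with equal $w$-sums would force $\sum_{i\in T_2\setminus T_1}w_i=0$ with all summands positive, which is impossible. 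Hence Sperner's theorem bounds its size by $\binom{n}{\lfloor n/2\rfloor}$, finishing the proof.

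The one genuine idea, and the step I would single out as the crux, is the reduction to an antichain: this is Erdős's trick from the Littlewood--Offord problem, and recognizing that the condition ``the sum is rational'' becomes, after quotienting out $\QQ$ and orienting the coordinates, a single-value subset-sum condition on a linearly ordered group is what makes Sperner applicable. Everything else --- the Vandermonde identity for the construction, the choice of basis, the $\triangle N$ computation, and the fact that $\QQ^{d}$ (indeed any torsion-free group) is orderable --- is routine bookkeeping that I do not expect to cause difficulty.
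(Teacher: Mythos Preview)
Your proof is correct and follows essentially the same approach as the paper: both quotient out $\QQ$, orient the resulting nonzero values so that partial sums strictly increase along chains, and then apply the Sperner bound to the subsets with a fixed sum. The only cosmetic differences are that the paper maps into $\mathbb{R}$ via its core homomorphism (Lemma~\ref{key_homo}) rather than into $\QQ^d$ with the lex order, and it packages your symmetric-difference trick inside the definition of a custom partial order on which it runs an LYM-style chain count instead of citing Sperner's theorem directly.
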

	
	The numerical result has a resemblance with the statement of the classical Sperner's theorem. In fact, our proof will use ideas similar to one of its proofs (see, e.g., in \cite{Patkos}), namely chain decompositions.
	
	Finally, since our ground set has a natural ordering, we also discuss the following variant.
	
	\begin{problem} \label{ordered}
		Let $A$ denote a set of $n$ irrational numbers $a_1< a_2< \ldots< a_n$.  Let $\cH_{ord}(A)$ denote the set of subsets $A'$ of form $A'=A\cap I$ for which $\sum_{x\in A'} x\in \mathbb{Q}$ and $I\subset \mathbb{R}$ is an interval. Determine $\h_{ord}(n):=\max_{A\in \Q^n} |\cH_{ord}(A)|$.
	\end{problem}

	\begin{theorem}
		$\h_{ord}(n)=\lfloor {\frac{n^{2}}{4}} \rfloor.$
	\end{theorem}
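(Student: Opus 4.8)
The plan is to translate the question into an extremal problem about proper colourings of a path, via partial sums. Order $A$ as $a_1<a_2<\cdots<a_n$ and set $s_0:=0$ and $s_k:=a_1+\cdots+a_k$ for $1\le k\le n$. Since $A$ is ordered, every nonempty set of the form $A\cap I$ (with $I$ an interval) is a block $\{a_i,a_{i+1},\dots,a_j\}$ for a unique pair $1\le i\le j\le n$; its sum is $s_j-s_{i-1}$, which lies in $\QQ$ exactly when $s_{i-1}$ and $s_j$ belong to the same coset of $\QQ$ in $\R$. Hence, writing $\chi(k):=s_k+\QQ$ for $0\le k\le n$, the number $|\cH_{ord}(A)|$ is precisely the number of monochromatic (unordered) pairs of vertices of the path $P_{n+1}$ on $\{0,1,\dots,n\}$ under the colouring $\chi$, that is, $\sum_c\binom{m_c}{2}$, where $m_c$ counts the vertices of colour $c$. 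The key structural fact is that $\chi$ is a \emph{proper} colouring of $P_{n+1}$: consecutive partial sums differ by $a_k\notin\QQ$, so $\chi(k-1)\ne\chi(k)$.

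For the upper bound I would argue about arbitrary proper colourings of $P_N$ with $N=n+1$. Every colour class is an independent set of $P_N$, hence has at most $M:=\lceil N/2\rceil$ vertices, while $\sum_c m_c=N$. Now a short exchange argument bounds $\sum_c\binom{m_c}{2}$: as long as two classes have sizes $a\le b$ with $a\ge 1$ and $b\le M-1$, replacing them by classes of sizes $a-1$ and $b+1$ respects both constraints and increases $\sum_c\binom{m_c}{2}$ by $b-a+1\ge 1$; since the quantity is bounded, the process stops, necessarily at a configuration with at most one class of size below $M$, which (using $N\le 2M$) forces the nonzero class sizes to be $M$ and $N-M$. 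Therefore $\sum_c\binom{m_c}{2}\le\binom{M}{2}+\binom{N-M}{2}=\binom{\lceil (n+1)/2\rceil}{2}+\binom{\lfloor (n+1)/2\rfloor}{2}$, and a routine case check ($n$ even versus odd) identifies the right-hand side with $\lfloor n^2/4\rfloor$, giving $\h_{ord}(n)\le\lfloor n^2/4\rfloor$.

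For the lower bound I would realise the balanced $2$-colouring explicitly. Fix an irrational $\alpha>0$ and rationals $c_1<c_2<\cdots<c_n$ with $c_{i+1}-c_i>2\alpha$ for all $i$, and put $a_i:=c_i+(-1)^{i+1}\alpha$. Each $a_i$ is irrational, and $a_{i+1}-a_i=(c_{i+1}-c_i)\mp 2\alpha>0$, so $a_1<\cdots<a_n$. Summing, $s_m=(c_1+\cdots+c_m)+\alpha$ when $m$ is odd and $s_m=c_1+\cdots+c_m\in\QQ$ when $m$ is even, so $\chi$ takes one value on all even indices and another on all odd indices; it is thus the alternating $2$-colouring of $P_{n+1}$, whose classes have sizes $\lfloor n/2\rfloor+1$ and $\lceil n/2\rceil$. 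Hence $|\cH_{ord}(A)|=\binom{\lfloor n/2\rfloor+1}{2}+\binom{\lceil n/2\rceil}{2}=\lfloor n^2/4\rfloor$, matching the upper bound.

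I do not anticipate a genuine difficulty; the only points needing care are verifying that in the ordered setting every $A\cap I$ really is a contiguous block (so that the count is exactly $\sum_c\binom{m_c}{2}$) and checking that the terminal configuration of the exchange argument is forced, which relies on $M=\lceil N/2\rceil$ satisfying $N\le 2M$. The arithmetic identifying $\binom{\lceil (n+1)/2\rceil}{2}+\binom{\lfloor (n+1)/2\rfloor}{2}$ with $\lfloor n^2/4\rfloor$ is elementary.
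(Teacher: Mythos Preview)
Your proof is correct. Both you and the paper use the same core observation—that consecutive partial sums $s_{k-1},s_k$ differ by the irrational $a_k$—but you package the upper bound differently. The paper partitions the interval sums by their left endpoint: for each $s$ the ``sum-chain'' $\cC_{s,n}=\{\sum_{i=s}^k a_i : s\le k\le n\}$ has the property that rational elements are never adjacent and the first element is irrational, so each chain contributes at most $\lfloor (n-s+1)/2\rfloor$ rational sums, and summing over $s$ gives $\lfloor n^2/4\rfloor$ directly. You instead globalise the observation into a proper $2$-colouring constraint on $P_{n+1}$, translate the count into $\sum_c\binom{m_c}{2}$, and then run an exchange argument on the class sizes subject to $m_c\le\lceil(n+1)/2\rceil$. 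The paper's route is shorter and avoids the optimisation step; your colouring reformulation is more structural and makes the extremal configuration (the balanced $2$-colouring) visible from the outset, which also explains why your lower-bound construction is the natural one. The two lower-bound constructions are essentially identical—alternating cosets $\QQ\pm\alpha$—differing only in bookkeeping.
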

	
	Our paper is organized as follows. In Section 2 we introduce a key idea which helps to tackle the case $r=3$ and to reveal the connection to Problem \ref{elsoprob_variant} as well. We then complete the proof of Theorem \ref{r=3} and Theorem \ref{nagyr}. Section 3 is devoted to the proof Theorem \ref{nemuniform} followed by the proof Theorem \ref{ordered} in Section 4. Finally, pose some open problems in Section 5.

	\section{Irrational sum free sets - uniform setting}
	
	We begin with a key lemma that will be applied several times.
	
	\begin{lemma}[Key lemma on homomorphisms]\label{key_homo}
		Let $A$ denote a set of irrational numbers. Consider the vector space generated by $A$ over  $\mathbb{Q}$, $\langle A\rangle\subset \mathbb{R}$. As it has a finite basis over $\mathbb{Q}$, there exists a vector space homomorphism $\phi : \langle A\cup \{1\} \rangle \to \mathbb{R}$, such that $\ker(\phi)=\mathbb{Q}$. 
	\end{lemma}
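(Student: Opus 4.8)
The statement is purely linear-algebraic over $\mathbb{Q}$, and the plan is simply to split off the line $\mathbb{Q}\cdot 1$ from $V:=\langle A\cup\{1\}\rangle$. First I would record that since $A$ is finite, $V$ is a $\mathbb{Q}$-vector space of dimension at most $|A|+1$, in particular finite-dimensional, and that $\mathbb{Q}\cdot 1\subseteq V$ is a $1$-dimensional subspace because $1\neq 0$; under the usual identification of $q$ with $q\cdot 1$ this subspace is exactly $\mathbb{Q}\subseteq\mathbb{R}$.

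Next, extending the linearly independent set $\{1\}$ to a basis of $V$, I would fix a $\mathbb{Q}$-linear complement $W$ of $\mathbb{Q}\cdot 1$, so that $V=(\mathbb{Q}\cdot 1)\oplus W$ as $\mathbb{Q}$-vector spaces. The key point is that $W\subseteq V\subseteq\mathbb{R}$, hence $W$ is itself a $\mathbb{Q}$-subspace of $\mathbb{R}$, so the projection $\phi\colon V\to\mathbb{R}$ onto $W$ along $\mathbb{Q}\cdot 1$ --- i.e.\ $\phi(q\cdot 1+w):=w$ for $q\in\mathbb{Q}$, $w\in W$ --- is a well-defined $\mathbb{Q}$-linear map into $\mathbb{R}$. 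Because the decomposition is direct, $\phi(x)=0$ exactly when $x\in\mathbb{Q}\cdot 1$, i.e.\ $\ker(\phi)=\mathbb{Q}$, which is what is claimed. An equivalent construction that some readers may prefer: extend $\{1\}$ to a basis $\{1,v_2,\dots,v_d\}$ of $V$, pick $\mathbb{Q}$-linearly independent reals $c_2,\dots,c_d$ (these exist since $[\mathbb{R}:\mathbb{Q}]$ is infinite), and set $\phi(1):=0$ and $\phi(v_i):=c_i$; the independence of the $c_i$ gives injectivity on $\langle v_2,\dots,v_d\rangle$ and hence $\ker(\phi)=\mathbb{Q}$.

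I do not anticipate a real obstacle: the only points worth stating explicitly are that $1\neq 0$ (so the line $\mathbb{Q}\cdot 1$ is genuinely one-dimensional) and that the complement $W$ may be chosen inside $\mathbb{R}$ (so that $\phi$ really maps into $\mathbb{R}$, not merely into an abstract quotient). Finiteness of $A$ makes ``extend to a basis'' entirely elementary, although the same argument with a Hamel basis would handle arbitrary $A$. This lemma will be applied throughout Section 2 in the form: for $A'\subseteq A$ one has $\sum_{x\in A'}x\in\mathbb{Q}$ if and only if $\sum_{x\in A'}\phi(x)=0$, turning rational-sum conditions into genuine zero-sum conditions.
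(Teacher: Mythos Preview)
Your argument is correct: splitting $V=\langle A\cup\{1\}\rangle$ as $\mathbb{Q}\cdot 1\oplus W$ and taking $\phi$ to be the projection onto $W\subseteq\mathbb{R}$ is exactly the right idea, and your care about $W$ sitting inside $\mathbb{R}$ (so that the codomain is genuinely $\mathbb{R}$) is well placed. The paper itself gives no separate proof of this lemma; the justification is folded into the statement via the clause ``As it has a finite basis over $\mathbb{Q}$,\ldots'', so your write-up is more detailed than what appears there but follows the same intended route.
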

	Such a homomorphism will be referred to as a \textit{core homomorphism} with respect to $A$. This lemma enables us to focus on the maximum number of zero-sums instead of rational ones.

	\begin{lemma} $\h(n,r)$  can be attained on a set $A\in \Q^n$ which is contained in a $1$-dimensional subspace of a $2$-dimensional vector space $\QQ(\alpha)$ for some $\alpha\in \Q$, more precisely, $A\subset \alpha\QQ$.    
	\end{lemma}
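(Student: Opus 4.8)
The plan is to transport the counting problem onto a single line inside $\mathbb{Q}$, by composing a core homomorphism with a generic linear projection.

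First I would fix an $n$-tuple $A=(a_1,\dots,a_n)\in\Q^n$ attaining $\h(n,r)$ and apply Lemma~\ref{key_homo} to get a core homomorphism $\phi\colon\langle A\cup\{1\}\rangle\to\mathbb{R}$ with $\ker\phi=\mathbb{Q}$. Set $v_i:=\phi(a_i)$; since each $a_i$ is irrational and $\ker\phi=\mathbb{Q}$, we have $v_i\neq0$ for all $i$. For an $r$-element index set $S\subseteq[n]$, linearity of $\phi$ gives $\sum_{i\in S}a_i\in\mathbb{Q}\iff\sum_{i\in S}v_i=0$, so $|\cH(A,r)|$ equals the number of $r$-subsets $S$ with $\sum_{i\in S}v_i=0$. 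It thus suffices to produce an $n$-tuple contained in $\alpha\mathbb{Q}$ (for some irrational $\alpha$) with at least this many zero-sum $r$-subsets.

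Next I would collapse $W:=\operatorname{span}_{\mathbb{Q}}\{v_1,\dots,v_n\}\subseteq\mathbb{R}$, a finite-dimensional $\mathbb{Q}$-vector space of some dimension $d\ge1$, onto $\mathbb{Q}$ by a single $\mathbb{Q}$-linear functional $\psi\colon W\to\mathbb{Q}$ chosen so that $\psi(v_i)\neq0$ for every $i$. Such a $\psi$ exists, since for each $i$ the functionals vanishing at $v_i$ form a proper subspace of the dual space $W^{*}\cong\mathbb{Q}^{d}$, and a vector space over the infinite field $\mathbb{Q}$ is not a union of finitely many proper subspaces. Then I would fix any irrational $\alpha$, put $q_i:=\psi(v_i)\in\mathbb{Q}\setminus\{0\}$, and let $A^{*}:=(\alpha q_1,\dots,\alpha q_n)$; this $n$-tuple has all entries irrational and is contained in the $1$-dimensional $\mathbb{Q}$-subspace $\alpha\mathbb{Q}$ of the $2$-dimensional space $\mathbb{Q}(\alpha)$.

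It remains to verify optimality of $A^{*}$. If $\sum_{i\in S}a_i\in\mathbb{Q}$ then $\sum_{i\in S}v_i=0$, hence $\sum_{i\in S}\alpha q_i=\alpha\,\psi\!\big(\sum_{i\in S}v_i\big)=0\in\mathbb{Q}$; so every $r$-subset counted for $A$ is also counted for $A^{*}$, giving $|\cH(A^{*},r)|\ge|\cH(A,r)|=\h(n,r)$, and this is an equality because $\h(n,r)$ is the maximum. I expect the only genuinely delicate point to be the existence of the non-vanishing functional $\psi$ — the elementary fact that a vector space over an infinite field is not covered by finitely many proper subspaces — everything else being routine bookkeeping with $\phi$. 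I would also note that the $q_i$ need not be pairwise distinct; this is harmless, since $\cH$ counts $r$-subsets of the index set $[n]$, and the resulting reformulation — counting zero-sum $r$-subsequences of the rational tuple $(q_1,\dots,q_n)$ — is exactly Problem~\ref{elsoprob_variant}.
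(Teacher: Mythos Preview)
Your proof is correct and rests on the same idea as the paper's: push an optimal configuration through a generic $\mathbb{Q}$-linear map that preserves all zero-sums while keeping every image nonzero. The only difference is packaging: the paper reduces the dimension one step at a time via explicit maps $\psi_\lambda$ sending $a_2\mapsto\lambda a_1$ and fixing the remaining basis vectors (avoiding finitely many bad $\lambda\in\mathbb{Q}$ at each step), whereas you collapse the whole span $W$ in one shot with a single functional $\psi\in W^{*}$, using that $\mathbb{Q}^{d}$ is not a finite union of proper subspaces. Your version is slightly cleaner, and you also make explicit the possible non-distinctness of the $q_i$ and why it is harmless for the sequence reformulation (Problem~\ref{elsoprob_variant}), a point the paper leaves implicit.
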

	\begin{proof}
		Suppose that $\h(n,r)=\h(n,A)$, and $\langle A \rangle_{\QQ}$ has dimension $d$ over $\QQ$. We may also assume that $\QQ\cap \langle A \rangle_{\QQ}=\{0\}$ in view of Lemma \ref{key_homo}, since otherwise the application of a core homomorphism w.r.t. $A$ would provide such a set.
		Then we can pick $d$ elements $\{a_1, a_2, \ldots, a_d\}$ from $A$ which form a basis in $\langle A \rangle_{\QQ}$, hence every $a_i$ for $ i\in \{1, 2, \ldots, n\}$ is a linear combination  of the basis (over $\QQ$). Apply the linear map $\psi_{\lambda}$ defined by 
		
		$\psi_{\lambda}:\begin{cases}
			a_1 \to a_1\\
			a_2 \to  \lambda\cdot a_1\\
			a_i\to a_i & \text{ for every } i\in \{3, \ldots d\},
		\end{cases}$
		
		for $\lambda\in \QQ$. Apart from finitely many $\lambda\in \QQ$, this 
		maps the elements of $A$ to $\Q$, hence there is a $\lambda$ for which $\psi(A)\in \Q^n$ and $\h(n,A)\le\h(n,\psi(A))$ such that the dimension $d$ of the vector space generated by the set of irrationals is decreased by one, provided that $d>1$. The lemma thus follows.
	\end{proof}
	By the application of this lemma, we get Problem \ref{elsoprob_variant} as an equivalent reformulation to our main problem.
	

	\subsection{$3$-term sums, proof of  Theorem \ref{r=3}}
	Now we are ready to prove Theorem \ref{r=3}.
	First we apply Lemma \ref{key_homo} and thus we assume that every rational sum is zero. Then let $p$ denote the number of positive elements in $A$. Without loss of generality, we may also assume that $p\ge n-p.$\\ It is clear that  it is possible to get $\binom{p}{2}(n-p)$ $3$-term zero-sums if we have $p$ elements equal to $\alpha$ and $n-p$ elements equal to $-2\alpha$ for some $0< \alpha\in \Q$.\\
	On the other hand, observe that every zero-sum contains at least one positive and a negative number. A pair of a positive and a negative number uniquely determines a positive or negative number with which they form a zero-sum triple. Thus the number of zero-sum triples is at most $p\cdot(n-p)\cdot\max\{p-1, n-p-1\}/2$, since every such triple is counted twice.
	Hence, we have at most $\binom{p}{2}(n-p)$ $3$-term zero-sums. The maximum of $\binom{p}{2}(n-p)$ is taken at $p=n-\lfloor \frac{n}{3}\rfloor$, hence the claim.\qed
	
	\subsection{$r$-term sums, $r$ large}
	
	Here we prove Theorem \ref{nagyr}. In order to do this, we introduce a new (auxiliary) function.

	\begin{definition} Let $\s_r(A, \alpha)$ denote the number of $r$-term sums in $A$ which adds up to an element of $\QQ+\alpha$.\\
		Let
		$g(n, r)$ denote $$\max_{\alpha\in \mathbb{R}}\max_{A: A\in \Q^n \setminus (\QQ+\alpha/r)^n} \s_r(A, \alpha).$$ That is,  $g(n, r)$ maximizes the number of $r$-terms sums from the same equivalence class $\pmod \QQ$ which are not originated from a trivial construction $A\in (\QQ+\beta)^n $ for some $\beta\in \mathbb{R}$.
	\end{definition}
The next claim follows from the definition.
\begin{obs} $g(n,r)\ge \h(n,r)$.
\end{obs}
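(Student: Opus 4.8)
The plan is to specialise the free parameter $\alpha$ in the definition of $g(n,r)$ to the value $\alpha=0$ and to observe that the quantity one obtains already dominates $\h(n,r)$. Concretely I would proceed in three short steps. First, note that for $\alpha=0$ the coset $\QQ+\alpha$ is just $\QQ$, so for any $A\in\Q^n$ the number $\s_r(A,0)$ counts precisely the $r$-element subsets of $A$ whose sum is rational; that is, $\s_r(A,0)=|\cH(A,r)|$. Second, check that the choice $\alpha=0$ does not trigger the exclusion built into the definition of $g$: the configurations that are ruled out are the sets lying in $(\QQ+\alpha/r)^n=\QQ^n$, but a set in $\Q^n$ has only irrational entries and is therefore never of this form, so for $\alpha=0$ the inner maximum in the definition of $g(n,r)$ effectively ranges over all of $\Q^n$. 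Third, let $A\in\Q^n$ be a set attaining $\h(n,r)$, i.e.\ $|\cH(A,r)|=\h(n,r)$; then
\[
g(n,r)\ \ge\ \s_r(A,0)\ =\ |\cH(A,r)|\ =\ \h(n,r),
\]
where the first inequality is simply the definition of $g$ evaluated at the admissible pair $\alpha=0$ and this particular $A$.

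There is no genuine obstacle here; the only point that deserves a moment's attention is the second step, namely that restricting the maximum to sets outside the ``trivial'' family $(\QQ+\alpha/r)^n$ costs nothing when $\alpha=0$, and this is immediate because an irrational number is not rational. Note also that the inequality will in general be strict, since $g$ additionally allows $\alpha\neq 0$; the observation records only this easy direction, which is exactly what is needed afterwards to transfer upper bounds proved for $g(n,r)$ into upper bounds for $\h(n,r)$.
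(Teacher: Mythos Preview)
Your argument is correct and is exactly the unpacking the paper has in mind when it says the claim ``follows from the definition'': specialise to $\alpha=0$, note that then $\s_r(A,0)=|\cH(A,r)|$, and observe that the exclusion $A\in(\QQ+\alpha/r)^n=\QQ^n$ is vacuous for $A\in\Q^n$. Nothing needs to be added.
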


	\begin{lemma}\label{komp}
		$g(n,r)=g(n,n-r)$.
	\end{lemma}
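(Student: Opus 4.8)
The plan is to set up an explicit bijection between the $r$-term sums landing in $\QQ+\alpha$ for an $n$-element set $A$ and the $(n-r)$-term sums landing in $\QQ+\alpha'$ for a suitably modified set, exploiting that taking the complement of an $r$-subset inside $A$ turns a sum into the total sum $\sigma(A)$ minus that sum.

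First I would fix a near-extremal set $A=\{a_1,\dots,a_n\}\in\Q^n$ and a real number $\alpha$ realizing $g(n,r)$, and write $\sigma=\sum_{i=1}^n a_i$ for its total sum. For any $r$-subset $A'\subseteq A$ we have $\sum_{x\in A'}x\in\QQ+\alpha$ if and only if $\sum_{x\in A\setminus A'}x = \sigma - \sum_{x\in A'}x \in \QQ+(\sigma-\alpha)$. Hence complementation is a bijection between the $r$-term sums of $A$ in the class $\QQ+\alpha$ and the $(n-r)$-term sums of $A$ in the class $\QQ+(\sigma-\alpha)$; in particular $\s_r(A,\alpha)=\s_{n-r}(A,\sigma-\alpha)$ where on the right I mean the count of $(n-r)$-subsets summing into $\QQ+(\sigma-\alpha)$ — note $\sigma-\alpha$ is the relevant ``target residue'' and it need not equal $\alpha$ but that is immaterial since $g$ takes a maximum over all real targets.

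The one subtlety is the side condition in the definition of $g$: the maximizing $A$ must lie \emph{outside} $(\QQ+\alpha/r)^n$, i.e. it must not be a trivial construction for the $r$-side, and after complementation we need $A\notin(\QQ+(\sigma-\alpha)/(n-r))^n$ so that it is a legitimate competitor for $g(n,n-r)$. I would check that $A\in(\QQ+\beta)^n$ for \emph{some} $\beta$ is equivalent to $A$ being a translate of a rational set, and that in this degenerate case every $r$-subset has the same sum-residue while every $(n-r)$-subset has the same sum-residue; so $A$ is a ``trivial construction'' for the $r$-problem iff it is one for the $(n-r)$-problem (the excluded $\beta$ is just $\alpha/r$ versus $(\sigma-\alpha)/(n-r)$ but either both hold or neither). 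Thus a set admissible for $g(n,r)$ is admissible for $g(n,n-r)$ and conversely, and the bijection above gives $g(n,r)\le g(n,n-r)$; by symmetry ($n-(n-r)=r$) we get equality.

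The main obstacle is not any computation but making the bookkeeping around the side condition airtight: one must verify that ``$A$ is a translate of a rational $n$-set'' is a property of $A$ alone, independent of which $r$ or which target class $\alpha$ one is considering, so that the two exclusion clauses in $g(n,r)$ and $g(n,n-r)$ cut out exactly the same family of sets. Once that is pinned down the rest is the straightforward complementation bijection and the symmetry of the roles of $r$ and $n-r$.
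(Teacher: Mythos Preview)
Your proof is correct and uses exactly the paper's approach: the single identity $\s_r(A,\alpha)=\s_{n-r}(A,\sigma-\alpha)$ obtained by complementing $r$-subsets inside $A$. Your extra bookkeeping on the side condition---checking that $A$ is excluded as a trivial construction for the $r$-problem iff it is for the $(n-r)$-problem---is correct and arguably needed, though the paper's one-line proof simply omits it.
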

	
	\begin{proof}
		Observe that $\s_r(A, \alpha)=\s_{n-r}(A, -\alpha+\sum_{a\in A}a)$.
	\end{proof}

	\begin{lemma}\label{lower_g} We have
		$$g(n,r)\geq \begin{cases}
			\ {n-1\choose r-1} \text{ \ if \ } r>\frac{n}{2},\\
			2{n-2\choose r-1}  \text{ \ if \ } r=\frac{n}{2}, \\ 
			\   {n-1\choose r} \text{ \ if \ } r<\frac{n}{2}.
		\end{cases}  $$ 
	
	Moreover, 	$$\h(n,r)\geq \begin{cases}
		\ {n-1\choose r-1} \text{ \ if \ } r>\frac{n}{2},\\
		2{n-2\choose r-1}  \text{ \ if \ } r=\frac{n}{2}.
	\end{cases}  $$ 
	\end{lemma}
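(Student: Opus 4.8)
The plan is to prove all four inequalities by writing down explicit configurations, using the integer reformulation underlying the reduction to Problem \ref{elsoprob_variant}. Concretely, given a sequence $(b_1,\dots,b_n)$ of nonzero integers, fix an irrational $\omega$ and pairwise distinct rationals $q_1,\dots,q_n$ and put $A=\{\,b_i\omega+q_i : 1\le i\le n\,\}$. Since $\omega\notin\QQ$ and the $b_i$ are nonzero, $A$ is an $n$-element subset of $\Q$, and for $S\subseteq\{1,\dots,n\}$ one has $\sum_{i\in S}(b_i\omega+q_i)\in\QQ+m\omega$ exactly when $\sum_{i\in S}b_i=m$ (choosing the $q_i$ so that the $b_i\omega+q_i$ are distinct is automatic whenever $\omega\notin\QQ$). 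So it suffices to design a sequence with many $r$-subsets of prescribed sum: the zero-sum subsets will feed the $\h(n,r)$ and $g(n,r)$ bounds (with coset parameter $\alpha=0$, where $A\not\subseteq\QQ+\alpha/r=\QQ$ holds trivially), and the constant nonzero-sum subsets will feed the remaining $g(n,r)$ bound, where the non-triviality clause has to be checked.

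\textbf{First construction, for $r\ne n/2$.} Take $b_1=\dots=b_{n-1}=1$ and $b_n=-(r-1)$, which is nonzero as $r>1$. An $r$-subset missing index $n$ has sum $r$, and there are $\binom{n-1}{r}$ of these; an $r$-subset containing index $n$ has sum $(r-1)-(r-1)=0$, and there are $\binom{n-1}{r-1}$ of these. The zero-sum subsets already give $\h(n,r)\ge\binom{n-1}{r-1}$, hence also $g(n,r)\ge\binom{n-1}{r-1}$; this yields the first line of the $g$-bound when $r>n/2$ (note $\binom{n-1}{r-1}\ge\binom{n-1}{r}$ there, since $(n-r)/r\le1$) and the first line of the $\h$-bound. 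For the third line, take the coset parameter $\alpha=r\omega$: then $\s_r(A,\alpha)$ counts exactly the sum-$r$ subsets, so $g(n,r)\ge\binom{n-1}{r}$, the only point to verify being $A\not\subseteq\QQ+\alpha/r=\QQ+\omega$, which holds since $b_n\omega+q_n\equiv-(r-1)\omega\not\equiv\omega\pmod{\QQ}$.

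\textbf{Second construction, for $r=n/2$ (so $n=2r$).} Take $b_1=\dots=b_{n-2}=1$ and $b_{n-1}=b_n=-(r-1)$. If an $r$-subset contains $k\in\{0,1,2\}$ of the indices $n-1,n$, then its sum equals $(r-k)-k(r-1)=r(1-k)$, which vanishes precisely for $k=1$; there are $2\binom{n-2}{r-1}$ such subsets. Hence $\h\!\left(n,\tfrac n2\right)\ge 2\binom{n-2}{r-1}$, and, $A$ consisting of irrationals, $g\!\left(n,\tfrac n2\right)\ge 2\binom{n-2}{r-1}$ as well, which is the middle line of both displays.

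The substance is entirely in the choice of these two sequences; the rest is the elementary binomial bookkeeping above together with the trivial checks that the lifted sets lie in $\Q^n$ and meet the non-triviality requirement in the definition of $g$, so I do not anticipate a real obstacle. As a side remark, the $r>n/2$ lines of the $g$-bound can alternatively be obtained from the $r<n/2$ lines via Lemma \ref{komp}: $g(n,r)=g(n,n-r)\ge\binom{n-1}{n-r}=\binom{n-1}{r-1}$ whenever $n-r<n/2$.
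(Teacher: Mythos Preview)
Your proof is correct and follows essentially the same approach as the paper: the paper places $n-1$ elements in one coset $\QQ+\alpha$ and one exceptional element elsewhere (respectively, $n-2$ and two exceptional elements when $r=n/2$), which is precisely your integer sequence $(1,\dots,1,-(r-1))$ (respectively $(1,\dots,1,-(r-1),-(r-1))$) after lifting via $b_i\mapsto b_i\omega+q_i$. Your write-up is somewhat more explicit in verifying the non-triviality clause $A\not\subseteq\QQ+\alpha/r$ in the definition of $g$, and your closing remark on deducing the $r>n/2$ line from Lemma~\ref{komp} anticipates the paper's Corollary~\ref{kov}.
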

	
	\begin{proof}
		Suppose that $r\neq \frac{n}{2}$. Then choose the elements of $A$ such that all but one belong to the same equivalence class
		$\QQ+\alpha$. Here, we either consider the $r$-sets of $A$ containing the exceptional element (case $r>\frac{n}{2}$) or we consider the $r$-sets of $A$ not containing the exceptional element (case $r<\frac{n}{2}$). The exceptional element may be from $\QQ-(r-1)\alpha$, which shows the bound on $\h(n,r)$ when $r>\frac{n}{2}$.\\
		If $r=\frac{n}{2}$, then let $A$ consists of two elements from the same equivalence class $\QQ+\beta$ and $n-2$  from another equivalence class $\QQ+\alpha$, $\beta\neq \alpha$. Then $\s_r(A, \alpha(r-1)+\beta)=  2{n-2\choose r-1}$. Setting $\beta=-(r-1)\alpha$ gives the bound on $h(n,n/2)$.
	\end{proof}
	
	\begin{lemma}\label{rec_g}
		We have either
		$g(n,r)\leq g(n-1,r)\cdot \frac{n}{n-r}$, or $g(n,r)={n-1\choose r-1}$, or $g(n,r)={n-1\choose r}$.
	\end{lemma}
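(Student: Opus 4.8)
I would prove Lemma \ref{rec_g} by a counting/averaging argument over the $n$ ways to delete one element from the extremal sequence $A$. Fix $A \in \Q^n$ and a shift $\alpha \in \mathbb{R}$ realizing $g(n,r) = \s_r(A,\alpha)$, with $A \notin (\QQ + \alpha/r)^n$. For each $a \in A$, write $A_a := A \setminus \{a\}$, an $(n-1)$-element set, and let $\alpha_a := \alpha$ for those deletions that are "legitimate", i.e.\ for which $A_a \notin (\QQ + \alpha/r)^{n-1}$; for such $a$ we have $\s_r(A_a, \alpha) \le g(n-1,r)$. Each $r$-term sum of $A$ lying in $\QQ + \alpha$ survives in exactly $n - r$ of the deleted sets $A_a$ (namely those where the removed element is not one of the $r$ summands), so
\begin{equation*}
(n-r)\,\s_r(A,\alpha) \;=\; \sum_{a \in A} \s_r(A_a, \alpha).
\end{equation*}
If every deletion is legitimate, the right-hand side is at most $n \cdot g(n-1,r)$, giving $g(n,r) \le \frac{n}{n-r} g(n-1,r)$, the first alternative.

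**Handling the illegitimate deletions.** The only obstruction is that some deletion could be "trivial": $A_a \in (\QQ + \alpha/r)^{n-1}$ for some $a$. This forces $n-1$ of the elements of $A$ to lie in the single class $\QQ + \alpha/r$, and $a$ itself in some class $\QQ + \gamma$ with $\gamma \ne \alpha/r$ (if $a$ were also in $\QQ+\alpha/r$ we would have $A \in (\QQ+\alpha/r)^n$, excluded). In that case $A$ has the extremal "one exceptional element" structure from Lemma \ref{lower_g}, and I can compute $\s_r(A,\alpha)$ directly: an $r$-set sums into $\QQ + \alpha$ precisely when it is compatible with the coset arithmetic. Here $r \cdot (\alpha/r) = \alpha$, so any $r$-subset avoiding $a$ works, and an $r$-subset containing $a$ works iff $\gamma + (r-1)(\alpha/r) \equiv \alpha \pmod{\QQ}$, i.e.\ $\gamma \equiv \alpha/r$, which is false. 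Hence $\s_r(A,\alpha) = \binom{n-1}{r}$, which is the third listed alternative. (Symmetrically, if one applies the same reasoning to the complement via Lemma \ref{komp}, or if the exceptional element sits in the "right" class to be counted, one lands on $\binom{n-1}{r-1}$, the second alternative; I would phrase this so that whichever of the two trivial-structure cases occurs produces one of those two closed-form values.)

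**Main obstacle.** The subtlety I expect to spend the most care on is the bookkeeping of exactly when a deletion is illegitimate and what structural conclusion that forces — in particular making sure the case analysis is exhaustive: either all $n$ deletions are legitimate (first alternative), or at least one is not, in which case $A$ is forced into the rigid "all but one in one coset" configuration and $\s_r(A,\alpha)$ is pinned down exactly (second or third alternative). One should double-check that a single illegitimate deletion already forces the global structure (it does, since it constrains $n-1$ of the $n$ elements), so there is no mixed case to worry about. The averaging identity itself and the coset computations are routine; the only real content is this dichotomy.
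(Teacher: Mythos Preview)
Your approach is correct and essentially identical to the paper's: both double-count the $r$-term sums over all single-element deletions $A\setminus\{a\}$, obtaining $(n-r)\,g(n,r)\le n\cdot g(n-1,r)$ unless some deletion lands entirely in one coset, in which case $\s_r(A,\alpha)$ is computed directly. Your ``illegitimate'' condition is in fact sharper than the paper's (you only need $A_a\subset\QQ+\alpha/r$, not an arbitrary coset), so your exceptional case always yields exactly $\binom{n-1}{r}$ --- the $\binom{n-1}{r-1}$ alternative you worry about never arises in your formulation, and the bookkeeping you flag as the main obstacle is simpler than you anticipate.
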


	\begin{proof} We do a double counting argument.
		Let us take an $n$-element set $A$ with $\s_r(A, \alpha)=g(n,r)$ and consider the sum $$\sum_{A'\subset A \ : \ |A\setminus A'|=1} \s_r(A', \alpha).$$ 
		On the one hand, it is clearly bounded from above by $n\cdot g(n-1, r)$, provided that there is no $n-1$-element subset which falls into some equivalence class $\QQ+\beta$. In such cases, we indeed get $g(n,r)={n-1\choose r-1}$ or $g(n,r)={n-1\choose r}$, depending on the relation of $r$ and $n$.\\
		On the other hand, every $r$-term sum of $\s_r(A, \alpha)$ is counted exactly $n-r$ times in the summation. This completes the proof.
	\end{proof}
	
	\begin{lemma}\label{rec2_g} We have either
		$g(n,r)\leq g(n-1, r-1)+g(n-1,r)$ or $g(n,r)={n-1\choose r-1}$, or $g(n,r)={n-1\choose r}$.
	\end{lemma}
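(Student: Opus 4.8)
The plan is to run the same double-counting idea as in Lemma~\ref{rec_g}, but to pivot on a single distinguished element instead of averaging over all of them, so that the set-level analogue of Pascal's identity $\binom{n}{r}=\binom{n-1}{r-1}+\binom{n-1}{r}$ appears directly. Fix a set $A\in\Q^n\setminus(\QQ+\alpha/r)^n$ with $\s_r(A,\alpha)=g(n,r)$; note that $g(n,r)\ge 1$ by Lemma~\ref{lower_g}. Pick any $a\in A$ and write $a\in\QQ+\gamma$. Each $r$-subset $A'\subseteq A$ with $\sum_{x\in A'}x\in\QQ+\alpha$ either misses $a$, and is then an $r$-subset of $A\setminus\{a\}$ whose sum lies in $\QQ+\alpha$, or contains $a$, and then $A'\setminus\{a\}$ is an $(r-1)$-subset of $A\setminus\{a\}$ whose sum lies in $\QQ+(\alpha-\gamma)$. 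Hence
$$g(n,r)=\s_r(A,\alpha)\le \s_r(A\setminus\{a\},\alpha)+\s_{r-1}(A\setminus\{a\},\alpha-\gamma).$$

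Next I would split according to the coset structure of $A\setminus\{a\}$ inside $\la A\cup\{1\}\ra$. If $A\setminus\{a\}$ is not contained in a single coset of $\QQ$, then in particular it lies neither in $(\QQ+\alpha/r)^{n-1}$ nor in $(\QQ+(\alpha-\gamma)/(r-1))^{n-1}$, so both summands on the right-hand side above are admissible for the auxiliary maxima and are bounded by $g(n-1,r)$ and $g(n-1,r-1)$ respectively, yielding $g(n,r)\le g(n-1,r-1)+g(n-1,r)$ — the first alternative. If instead $A\setminus\{a\}\subseteq\QQ+\beta$ for some $\beta$, then every $r$-subset of $A$ avoiding $a$ has sum in $\QQ+r\beta$ (there are $\binom{n-1}{r}$ such subsets) and every $r$-subset containing $a$ has sum in $\QQ+(a+(r-1)\beta)$ (there are $\binom{n-1}{r-1}$ such subsets).

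To finish the second case I would observe that the residues $r\beta$ and $a+(r-1)\beta$ cannot both be $\equiv\alpha\pmod{\QQ}$: subtracting would give $a\equiv\beta\pmod{\QQ}$, which together with $A\setminus\{a\}\subseteq\QQ+\beta$ forces $A\subseteq\QQ+\beta=\QQ+\alpha/r$, contradicting the choice of $A$. Therefore $\s_r(A,\alpha)$ equals one of $0$, $\binom{n-1}{r}$, $\binom{n-1}{r-1}$, and since $g(n,r)\ge 1$ it equals $\binom{n-1}{r}$ or $\binom{n-1}{r-1}$, which are the two remaining alternatives.

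The only delicate point — the genuine obstacle — is the applicability of the auxiliary quantities $g(n-1,r)$ and $g(n-1,r-1)$: by definition these range only over sets that avoid the forbidden ``trivial'' single-coset configuration, so one has to be sure that deleting $a$ from $A$ does not land us inside such a configuration. This is exactly what the dichotomy on whether $A\setminus\{a\}$ spans more than one coset takes care of, and it is also the reason the degenerate branch collapses to the binomial values rather than to the recursion.
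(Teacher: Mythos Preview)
Your argument is correct and follows essentially the same route as the paper's proof: fix an extremal pair $(A,\alpha)$, split the $r$-subsets according to whether they contain a chosen element $a$, and observe that the two pieces are bounded by $g(n-1,r)$ and $g(n-1,r-1)$ unless $A\setminus\{a\}$ lies in a single coset of $\QQ$, in which case $\s_r(A,\alpha)$ is forced to be one of the two binomials. Your write-up is in fact more careful than the paper's on the one point that matters, namely verifying that $A\setminus\{a\}$ is admissible for the definitions of $g(n-1,r)$ and $g(n-1,r-1)$; the paper handles this implicitly by first assuming no coset contains $n-1$ elements of $A$.
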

	
	\begin{proof} Once again, if the maximum of $g(n,r)$ is attained at a set $A$ described in Lemma \ref{lower_g} (case $n\neq 2r$), then we have either $g(n,r)={n-1\choose r-1}$, or $g(n,r)={n-1\choose r}$.
		If there is no equivalence class $\QQ+\alpha$ admitting $n-1$ elements from $A$, then let us chose an element $a\in A$. We have at most $ g(n-1, r-1)$ of those $r$-sets which contain $a$ and has the same sum modulo $\QQ$, and  $g(n-1,r)$ of those $r$-sets which do not contain $a$ and has the same sum modulo $\QQ$, hence the claim.
	\end{proof}
	
	\begin{theorem}\label{g(n,r)pontos}
		For every $r<\frac{n}{2}$, we have $g(n,r)=\binom{n-1}{r}$, while  $g(n,r)=2\binom{n-2}{r-1}$ holds for $r=n/2$.
	\end{theorem}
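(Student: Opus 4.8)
The plan is to run an induction on $n$ using the recursions of Lemmas~\ref{rec_g} and~\ref{rec2_g}, with Lemma~\ref{komp} used to fold the large-$r$ regime onto the small-$r$ regime, and Lemma~\ref{lower_g} supplying the matching lower bounds. Concretely, I would first treat $r<\frac n2$. Lemma~\ref{lower_g} already gives $g(n,r)\ge\binom{n-1}{r}$, so only the upper bound $g(n,r)\le\binom{n-1}{r}$ needs proof. Fix $A$ with $\s_r(A,\alpha)=g(n,r)$. By Lemma~\ref{rec_g} we are done unless $g(n,r)\le g(n-1,r)\cdot\frac{n}{n-r}$; by Lemma~\ref{rec2_g} we are done unless $g(n,r)\le g(n-1,r-1)+g(n-1,r)$. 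So assume both inequalities hold, and apply the induction hypothesis to the right-hand sides. The base case is $n=2r+1$ (the smallest $n$ with $r<n/2$), where one checks $g(2r+1,r)=\binom{2r}{r}$ directly from Lemma~\ref{lower_g} together with the trivial bound $g(n,r)\le\binom{n}{r}$ — actually here one should check that $\binom{2r}{r}$ is forced, e.g. via Lemma~\ref{komp} which gives $g(2r+1,r)=g(2r+1,r+1)$ and then the large-$r$ argument below.

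For the inductive step in the range $r<n/2$ I would argue as follows. If $r<\frac{n-1}{2}$ as well, then both $g(n-1,r)$ and $g(n-1,r-1)$ fall in (or on the boundary of) the small-$r$ regime, so by induction $g(n-1,r)=\binom{n-2}{r}$ and $g(n-1,r-1)=\binom{n-2}{r-1}$, whence Lemma~\ref{rec2_g} gives $g(n,r)\le\binom{n-2}{r-1}+\binom{n-2}{r}=\binom{n-1}{r}$, as desired. The delicate case is $n$ even with $r=\frac{n}{2}-\tfrac12$... which cannot happen since $r$ is an integer; the genuinely delicate boundary is $n-1=2r$, i.e. proving $g(2r+1,r)=\binom{2r}{r}$, and $n=2r$ itself. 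For $n=2r$: by Lemma~\ref{komp}, $g(2r,r)=g(2r,r)$ gives nothing, so instead use Lemma~\ref{rec_g} or~\ref{rec2_g} reducing to $g(2r-1,r-1)$ and $g(2r-1,r)$; now $2r-1<2(r-1)$ fails but $r<\tfrac{2r-1}{2}$... again an integrality check shows $g(2r-1,r-1)$ lies in the large-$r$ regime ($r-1<\tfrac{2r-1}{2}$ is false, $r-1>\tfrac{2r-1}2$ is false too, so $r-1=\lfloor(2r-1)/2\rfloor$), handled by the first branch below, and $g(2r-1,r)=\binom{2r-2}{r-1}$ by induction on the small-$r$ case with $r<\tfrac{2r-1}2$ false — so I would instead set up the induction to carry the statement \emph{for all $r\le n/2$ simultaneously}, proving $g(n,r)=\binom{n-1}{r}$ for $r<n/2$ and $g(n,r)=2\binom{n-2}{r-1}$ for $r=n/2$ in one joint induction on $n$.

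For the case $r=n/2$: the lower bound $2\binom{n-2}{r-1}$ is Lemma~\ref{lower_g}. For the upper bound, take $A$ attaining $g(n,r)$ with no equivalence class containing $n-1$ elements (else $g(n,r)=\binom{n-1}{r-1}=\binom{n-1}{n/2}$, and one checks $\binom{n-1}{n/2}\le 2\binom{n-2}{n/2-1}$, which holds since $\binom{n-1}{n/2}=\binom{n-2}{n/2-1}+\binom{n-2}{n/2}=\binom{n-2}{n/2-1}+\binom{n-2}{n/2-2}\le 2\binom{n-2}{n/2-1}$). Then Lemma~\ref{rec2_g} gives $g(n,r)\le g(n-1,r-1)+g(n-1,r)$. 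Here $r-1=\tfrac n2-1<\tfrac{n-1}2$ and $r=\tfrac n2>\tfrac{n-1}2$, so $r-1$ is in the small-$r$ regime for $n-1$ and $r$ is in the large-$r$ regime for $n-1$; by induction (small case) $g(n-1,r-1)=\binom{n-2}{r-1}$, and by Lemma~\ref{komp} $g(n-1,r)=g(n-1,n-1-r)=g(n-1,r-1)=\binom{n-2}{r-1}$. Summing, $g(n,r)\le 2\binom{n-2}{r-1}$, matching the lower bound. The main obstacle is the bookkeeping at the boundary $r=n/2$ and $r=(n-1)/2$: one must make sure that when a recursion drops $n$ by one, the resulting pair $(n-1,r)$ or $(n-1,r-1)$ is still covered by the joint induction hypothesis and that Lemma~\ref{komp} is invoked on the correct side; once the statement is phrased as a single joint induction over all admissible $(n,r)$ with $r\le n/2$, these cases close cleanly, and the ``exceptional'' alternatives $g(n,r)\in\{\binom{n-1}{r-1},\binom{n-1}{r}\}$ coming from the lemmas are each checked to be $\le$ the claimed value (and are in fact attained only in the stated extremal configurations).
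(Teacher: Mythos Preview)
Your plan has a genuine gap at the boundary case $n=2r+1$ (equivalently, the first value of $n$ with $r<n/2$), and the recursions alone cannot close it. Concretely, suppose the joint induction hypothesis holds for $n-1=2r$. Then Lemma~\ref{rec2_g} gives
\[
g(2r+1,r)\le g(2r,r-1)+g(2r,r)=\binom{2r-1}{r-1}+2\binom{2r-2}{r-1},
\]
but $\binom{2r}{r}=2\binom{2r-1}{r-1}$, so you would need $2\binom{2r-2}{r-1}\le\binom{2r-1}{r-1}=\binom{2r-2}{r-1}+\binom{2r-2}{r-2}$, i.e.\ $\binom{2r-2}{r-1}\le\binom{2r-2}{r-2}$, which is false for $r\ge2$. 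Lemma~\ref{rec_g} fares no better: it gives $g(2r+1,r)\le 2\binom{2r-2}{r-1}\cdot\frac{2r+1}{r+1}$, and comparing with $\binom{2r}{r}=2\binom{2r-2}{r-1}\cdot\frac{2r-1}{r}$ you would need $\frac{2r+1}{r+1}\le\frac{2r-1}{r}$, i.e.\ $0\le-1$. Your suggested fallback, using Lemma~\ref{komp} to write $g(2r+1,r)=g(2r+1,r+1)$ and then a ``large-$r$ argument'', is circular: you have no independent handle on $g(2r+1,r+1)$ other than komp itself.

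This is exactly why the paper's proof (which inducts on $r$ rather than $n$) inserts a separate, non-recursive argument for $g(2r+3,r+1)$ --- the same boundary case after re-indexing. There one assumes $\s_{r+1}(A,\alpha)>\binom{2r+2}{r+1}$, fixes an arbitrary pair $x,y\in A$, and compares two families of $(r+1)$-sets inside $A\setminus\{x,y\}$: those sums hitting $\mathbb Q+\alpha$ avoiding $x,y$, and the complements (in $A\setminus\{x\}$) of those containing $y$ but not $x$. A pigeonhole on $\binom{2r+1}{r+1}$ forces these families to overlap, which in turn forces $x\equiv y\pmod{\mathbb Q}$; since $x,y$ were arbitrary, all of $A$ lies in one class, a contradiction. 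Some such direct argument at the boundary is unavoidable; once $g(2r+1,r)=\binom{2r}{r}$ is secured, your recursive scheme for $n>2r+1$ and for $r=n/2$ does work as you describe.
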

	
	We state the immediate consequence of this before the proof.
	
	\begin{cor}\label{kov}
		For every $n>r>\frac{n}{2}$, we have $g(n,r)=\binom{n-1}{r-1}$. Indeed, we can apply Lemma \ref{komp} to derive $g(n,r)=g(n,n-r)=\binom{n-1}{n-r}=\binom{n-1}{r-1}$ from Theorem \ref{g(n,r)pontos}. 
	\end{cor}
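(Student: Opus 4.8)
The statement to prove is Corollary \ref{kov}: for every $n>r>\tfrac{n}{2}$ we have $g(n,r)=\binom{n-1}{r-1}$. My plan is to obtain this as a direct consequence of the two facts established just above, namely the complementation identity of Lemma \ref{komp} and the exact value of $g$ for small $r$ recorded in Theorem \ref{g(n,r)pontos}. No new combinatorial argument is needed; the whole content is a change of variable $r \mapsto n-r$ together with the symmetry of binomial coefficients.

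First I would invoke Lemma \ref{komp}, which asserts $g(n,r)=g(n,n-r)$. The hypothesis $r>\tfrac{n}{2}$ is exactly what guarantees that the complementary parameter satisfies $n-r<\tfrac{n}{2}$, so the quantity $g(n,n-r)$ lies in the regime covered by Theorem \ref{g(n,r)pontos}. Here I would be slightly careful to note that we are in the strict case $n-r<\tfrac n2$ (and not the boundary case $n-r=\tfrac n2$, which would require $r=\tfrac n2$, excluded by $r>\tfrac n2$), so the relevant branch of Theorem \ref{g(n,r)pontos} is the one giving $g(n,s)=\binom{n-1}{s}$ for $s<\tfrac n2$, applied with $s=n-r$.

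Applying that theorem then yields $g(n,n-r)=\binom{n-1}{n-r}$. The final step is the elementary binomial symmetry $\binom{n-1}{n-r}=\binom{n-1}{(n-1)-(n-r)}=\binom{n-1}{r-1}$, which closes the chain
\[
g(n,r)=g(n,n-r)=\binom{n-1}{n-r}=\binom{n-1}{r-1}.
\]
There is genuinely no hard step here: the argument is a one-line reduction, and the only thing worth checking explicitly is that the range hypothesis $r>\tfrac n2$ places $n-r$ strictly below $\tfrac n2$ so that the correct case of Theorem \ref{g(n,r)pontos} is triggered. Consequently this corollary is best presented exactly as the inline derivation already indicates, with the verification of the parameter range being the only point deserving a word of comment.
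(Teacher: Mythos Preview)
Your proposal is correct and follows exactly the paper's own derivation: apply Lemma~\ref{komp} to pass from $r$ to $n-r$, invoke the $s<\tfrac{n}{2}$ case of Theorem~\ref{g(n,r)pontos}, and finish with the binomial symmetry $\binom{n-1}{n-r}=\binom{n-1}{r-1}$. Your explicit check that $r>\tfrac{n}{2}$ forces $n-r<\tfrac{n}{2}$ (so the strict case of the theorem applies) is the only additional comment, and it is appropriate.
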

	
	\begin{proof}[Proof of Theorem \ref{g(n,r)pontos}] We prove by induction on $r$. The base case $r=1$  follows directly from the definition of the function $g$. Suppose now that the statement holds for all pairs $(n, k)$ for $k\leq r$ and we wish to derive the statement for $g(n,r+1)$.\\
		First, observe that we already know that $g(n,r+1)=\binom{n-1}{r}$ for $n\le 2r+1$ from  the combination of Corollary \ref{kov} and Lemma \ref{komp}. \\
		Next, building on the above observation, we apply Lemma \ref{rec_g} to determine 
		$g(2r+2,r+1)$ and get that 
		$$g(2r+2,r+1)\leq g(2r+1,r+1)\frac{2r+2}{r+1}=2\binom{2r}{r},$$ since the other two options yield smaller bounds equal to $\binom{2r+1}{r}$. However, we have a matching lower bound from Lemma \ref{lower_g}.\\
		We continue with the value of $g(2r+3,r+1)$. Suppose to the contrary that $\s_{r+1}(A, \alpha)=g(2r+3,r+1)> \binom{2r+2}{r+1}$, for some $2r+3$-element set $A$ and a real number $\alpha$. Such an $A$ cannot contain $n-1=2r+2$ elements from the same equivalence class, as it would only give $\binom{2r+2}{r+1}$ sums from the same class.  Take an arbitrary pair of elements $x, y\in A$. The number of $r+1$-terms sums which contribute to $\s_{r+1}(A, \alpha)=g(2r+3, r+1)$ and contain $x$ is at most $g(2r+2, r)$, which equals to  $\binom{2r+1}{r}$ by the inductional hypothesis.
		Thus we must have more than ${2r+2\choose r+1}-{2r+1\choose r}={2r+1\choose r+1}$ sums from $\QQ+\alpha$ which do not contain $x$. 
		Take two families of $r+1$-sets: the first family consists of those sets which have sum from $\QQ+\alpha$, and does not contain neither $x$ nor $y$. The second family  consists of the complement with respect to $A\setminus \{x\}$ of those sets which have sum from $\QQ+\alpha$, and does not contain $x$ but does contain $y$. 
		Each of these families consists of sets with equal set sums modulo $\QQ$. Note  that the elements of these sets are from the $2r+1$ elements of $A\setminus \{x,y\}$. However, since the cardinalities of the families add up to a number exceeding ${2r+1\choose r+1}$, hence there are sets belonging to both families and thus all these $r+1$-sets have equal set sums modulo $\QQ$. Consequently, all the sets in the first family and their complement w.r.t. $A\setminus \{x\}$  have set sum from $\QQ+\alpha$, moreover, there is a one-to-one correspondence between the elements of the first and second family thus   each has size at least $\frac{1}{2}{2r+1\choose r+1}$.
		\\ The same can be stated if we exchange the roles of $x$ and $y$. However, if a set $B\subseteq A\setminus \{x,y\}$ is counted in $\s_{r+1}(A, \alpha)$, and so is $A\setminus \{x\}\setminus B$ and $A\setminus \{y\}\setminus B$, it means that $x$ and $y$ are from the same equivalence class $\pmod \QQ$. But $x$ and $y$ were chosen arbitrarily, and all the elements of $A$ can not fall into the same equivalence class, so this leads to a contradiction.

		

		Thus we get that $g(2r+3,r+1)\le \binom{2r+2}{r+1}$ holds. 
		
		\vspace{0.1cm}
		
		For $n>2r+3$, we can apply Lemma \ref{rec_g} to get that
		
		$$g(n,r+1)\leq g(n-1,r+1)\cdot \frac{n}{n-r-1}\leq {n-2\choose r+1}\cdot \frac{n}{n-r-1}\leq {n-2\choose r+1}\cdot \frac{n-1}{n-r-2}={n-1\choose r+1}$$ holds, otherwise we have  $g(n,r+1)={n-1\choose r}$ or $g(n,r)={n-1\choose r+1}$. Since $r+1<\frac{n}{2}$, we indeed get $g(n,r+1)\leq {n-1\choose r+1}$ and complete the inductional step by observing  that 
		$g(n,r+1)\leq {n-1\choose r+1}$ follows from Lemma \ref{lower_g} which proves the equality
		$g(n,r)={n-1\choose r}$ provided that $r<\frac{n}{2}$ holds.      
	\end{proof}
			
		\begin{proof}[Proof of Theorem \ref{nagyr}]
			Lemma \ref{lower_g} and Theorem \ref{g(n,r)pontos} together proves that $g(n,r)=\h(n,r)$ for $r\ge n/2$, and determines the exact value, in accordance with the claim.
		\end{proof}

	\section{Irrational sum free sets, non-uniform case}
	
	In this section, we prove Theorem \ref{osszes} on the maximum number of rational sums.

	\begin{prop} Let  $\alpha$ be an irrational number, and choose $A=A_1\cup A_2$ such that $A_1\subset \QQ+\alpha, A_2\subset \QQ-\alpha$, with cardinalities  $|A_1|=\lfloor \frac{n}{2} \rfloor, |A_2|=\lceil \frac{n}{2} \rceil$. Then $\h(A)= {n\choose {\lfloor \frac{n}{2} \rfloor}}$.
	\end{prop}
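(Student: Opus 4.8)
The plan is to show directly that for the prescribed set $A=A_1\cup A_2$ the family $\cH(A)$ consists precisely of the ``balanced'' subsets, and then to evaluate their number by a Chu--Vandermonde identity.

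First I would analyse which subsets have rational sum. Write every element of $A_1$ as $q+\alpha$ and every element of $A_2$ as $q-\alpha$ with $q\in\QQ$. Given $A'\subseteq A$, split it as $A'=A_1'\cup A_2'$ with $A_i'\subseteq A_i$; then $\sum_{x\in A'}x$ equals a rational number plus $(|A_1'|-|A_2'|)\alpha$. Since $\alpha$ is irrational, this lies in $\QQ$ if and only if $|A_1'|=|A_2'|$. (Equivalently, one may invoke Lemma~\ref{key_homo}: a core homomorphism w.r.t.\ $A$ sends $A$ to the integer sequence consisting of $\lfloor n/2\rfloor$ copies of $1$ and $\lceil n/2\rceil$ copies of $-1$, and a subsequence is a zero-sum exactly when it uses equally many $+1$'s and $-1$'s.) Note that $A$ can indeed be realised inside $\Q^n$ by distinct irrationals, e.g.\ $A_1=\{\,i+\alpha : 0\le i<\lfloor n/2\rfloor\,\}$ and $A_2=\{\,j-\alpha : 0\le j<\lceil n/2\rceil\,\}$.

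It then remains to count the balanced subsets. Choosing $k$ elements from $A_1$ and $k$ elements from $A_2$ can be done in $\binom{\lfloor n/2\rfloor}{k}\binom{\lceil n/2\rceil}{k}$ ways, so $|\cH(A)|=\sum_{k\ge 0}\binom{\lfloor n/2\rfloor}{k}\binom{\lceil n/2\rceil}{k}$, the sum including $k=0$, which accounts for the empty set. Rewriting $\binom{\lceil n/2\rceil}{k}=\binom{\lceil n/2\rceil}{\lceil n/2\rceil-k}$ and applying the Chu--Vandermonde identity turns this into $\binom{\lfloor n/2\rfloor+\lceil n/2\rceil}{\lceil n/2\rceil}=\binom{n}{\lceil n/2\rceil}=\binom{n}{\lfloor n/2\rfloor}$, as claimed.

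I do not expect a genuine obstacle here: the argument is a short reduction followed by a classical binomial identity. The only points requiring a little care are bookkeeping --- making sure the empty set is counted (it has sum $0\in\QQ$) while singletons are not (they are irrational), checking that the $+$/$-$ partition of a subset is the right invariant, and confirming that the construction genuinely consists of $n$ distinct irrational numbers. All of these are immediate.
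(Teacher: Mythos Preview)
Your proof is correct and follows essentially the same route as the paper: identify that a subset has rational sum iff it contains equally many elements from $A_1$ and $A_2$, then evaluate $\sum_k \binom{\lfloor n/2\rfloor}{k}\binom{\lceil n/2\rceil}{k}$ via Vandermonde. The only cosmetic difference is which of the two binomial coefficients you flip before applying the identity.
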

	\begin{proof}
		Clearly, we get rational sums if and only if a subset $A'\subseteq A$ consists of the same number of elements from $A_1$ and $A_2$. Thus, we get 
		$$\cH(A)=\sum_{i=0}^{\lfloor \frac{n}{2} \rfloor} {{\lfloor \frac{n}{2} \rfloor}\choose i}\cdot {{\lceil \frac{n}{2} \rceil}\choose i} = \sum_{i=0}^{\lfloor \frac{n}{2} \rfloor} {{\lfloor \frac{n}{2} \rfloor}\choose {\lfloor \frac{n}{2} \rfloor -i}}\cdot {{\lceil \frac{n}{2} \rceil}\choose i} = {n\choose {\lfloor \frac{n}{2} \rfloor}}.$$
	\end{proof}
	
	Next, we prove that the value attained above is actually the maximum.
	
	\begin{prop}\label{32} For every set $A$ of $n$ irrational numbers, $\h(A)\le {n\choose {\lfloor \frac{n}{2} \rfloor}}$.
	\end{prop}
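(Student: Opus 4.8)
The plan is to reduce the statement to the fixed-target Erd\H{o}s--Littlewood--Offord inequality and then apply Sperner's theorem. First I would apply a core homomorphism $\phi$ with respect to $A$ (Lemma \ref{key_homo}). Writing $b_i:=\phi(a_i)$, we obtain a sequence of $n$ nonzero real numbers — they are nonzero precisely because $\ker\phi=\QQ$ while each $a_i$ is irrational — and a subset $A'\subseteq A$ satisfies $\sum_{x\in A'}x\in\QQ$ if and only if the corresponding index set $S\subseteq[n]$ satisfies $\sum_{i\in S}b_i=0$. Hence it suffices to prove the purely additive statement (this is Problem \ref{elsoprob_variant} in its non-uniform form): for any sequence $b_1,\dots,b_n$ of nonzero reals, the number of zero-sum subsequences $\bigl|\{S\subseteq[n]:\sum_{i\in S}b_i=0\}\bigr|$ is at most $\binom{n}{\lfloor n/2\rfloor}$.

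The crucial maneuver is to turn the zero-sum family into an antichain, which it is not in general: $\mathcal F_0:=\{S\subseteq[n]:\sum_{i\in S}b_i=0\}$ always contains $\emptyset$ and is closed under taking set-differences of nested members, so Sperner's theorem cannot be used on it directly. To repair this, let $M:=\{i:b_i<0\}$ and consider the involution $S\mapsto S\triangle M$ on $2^{[n]}$. A short computation gives $\sum_{i\in S\triangle M}|b_i|=\sum_{i\in S}b_i+\sum_{i\in M}|b_i|$, so this bijection carries $\mathcal F_0$ onto $\mathcal F:=\{S'\subseteq[n]:\sum_{i\in S'}|b_i|=x\}$, where $x:=\sum_{i\in M}|b_i|$ is a fixed target. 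Thus $|\mathcal F_0|=|\mathcal F|$, and we have reduced to counting subsets of a sequence of \emph{positive} reals having one prescribed sum. (It is worth noting that this reduction cannot be performed directly on the original rational-sum family: a nonempty set of positive irrationals can sum to a rational, so the sign-flipped family would not be an antichain — which is exactly why passing to zero-sums via Lemma \ref{key_homo} is essential.)

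Now $\mathcal F$ is an antichain: if $S'\subsetneq T'$ with both in $\mathcal F$, then $\sum_{i\in T'\setminus S'}|b_i|=0$ while $T'\setminus S'\neq\emptyset$ and every summand is positive, a contradiction. By Sperner's theorem — which, if a self-contained argument is wanted, I would prove by decomposing $2^{[n]}$ into $\binom{n}{\lfloor n/2\rfloor}$ symmetric chains and observing that an antichain meets each chain in at most one element — we get $|\mathcal F|\le\binom{n}{\lfloor n/2\rfloor}$. Tracing back through the bijections, $\h(A)=|\cH(A)|=|\mathcal F_0|=|\mathcal F|\le\binom{n}{\lfloor n/2\rfloor}$, as claimed.

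I do not expect a serious obstacle: the antichain observation and the Sperner bound are routine, and the only genuinely load-bearing ideas are the core homomorphism (to replace ``rational sum'' by ``zero sum'') and the involution $S\mapsto S\triangle M$ (to make the family an antichain). The whole argument is essentially the classical Sperner-type proof of the fixed-value Littlewood--Offord inequality, with Lemma \ref{key_homo} supplying the bridge from $\QQ$ to $\{0\}$. The one point demanding a little care is the bookkeeping in the identity $\sum_{i\in S\triangle M}|b_i|=\sum_{i\in S}b_i+\sum_{i\in M}|b_i|$ and the verification that $S\mapsto S\triangle M$ is indeed a bijection of $2^{[n]}$ carrying $\mathcal F_0$ exactly onto $\mathcal F$.
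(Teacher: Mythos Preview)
Your proof is correct and is at bottom the same argument as the paper's, though the packaging differs. The paper works directly with a custom graded poset on $2^A$ (declaring $A'\preccurlyeq A''$ when one passes from $A'$ to $A''$ by adding elements with $\phi>0$ and removing elements with $\phi<0$) and then carries out the LYM-style maximal-chain count, using that the $\phi$-sum strictly increases along every chain so each chain meets the zero-sum family at most once. Your involution $S\mapsto S\triangle M$ is precisely the isomorphism from that poset to the ordinary Boolean lattice, and your antichain observation is the same ``at most one per chain'' fact; you then invoke Sperner's theorem as a black box rather than redoing the chain count. The mathematics is identical, but your framing has the minor expository advantage of explicitly identifying the statement as the fixed-target Littlewood--Offord inequality.
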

	
	We introduce a definition before the proof. 
	
	\begin{defi} Let $A\in \Q^n $ and $\phi$ be a core homomorphism w.r.t $A$ (c.f. Lemma \ref{key_homo}). The\textit{ signed size of a sum} of a subset $A'\subset A$ is $|\{a\in A': \phi(a)>0\}|-|\{a\in A': \phi(a)<0\}|$.
	\end{defi}
	
	\begin{proof}[Proof of Proposition \ref{32}]
		Let us define a partially ordered set (poset) on the subsets of $A$. \\ $A'\preccurlyeq A''$ if and only if $\phi(a)<0$ for all $a \in A'\setminus A''$ and $\phi(a)>0$ for all $a \in A''\setminus A'$. This will give a graded poset, as the signed sizes of the sums serve as $n+1$ levels. The maximal chains in the poset clearly consist of $n+1$ subsets, one from each level. The number of  maximal chains is $n!$. Indeed, arriving from the least element, namely $\{a:\phi(a)<0\},$ to the greatest element  $\{a:\phi(a)>0\}$ of the poset, we can take an arbitrary permutation of the elements of $A$ to either add one (which have positive $\phi$ value) or drop one (having negative $\phi$ value). Now observe that crucially, there is at most one subset $A'$ in every maximal chain which has a sum $\sum_{a\in A'}\phi(a)$ equals to zero, since the sum is strictly monotonically increasing along any chain.  On the other hand, suppose that $|\{a\in A:\phi(a)>0\}|=p$ and hence $|\{a\in A:\phi(a)<0\}|=n-p$. Then every set $A'\subset A$ which has signed size $t$ belongs to $(p-t)!(n-p+t)!$ maximal chains, and  $(p-t)!\cdot (n-p+t)!\geq {\lfloor \frac{n}{2} \rfloor}!\cdot {\lceil \frac{n}{2} \rceil}!$  holds.  Consequently, the number of sets $A'$ for which $\sum_{a\in A'}\phi(a)=0$ is not more than $\frac{n!}{{\lfloor \frac{n}{2} \rfloor}!\cdot {\lceil \frac{n}{2} \rceil}!}$, what we wanted to prove. 
	\end{proof}
	
	\section{Irrational sum free sets in intervals}
	
	In order to prove Theorem \ref{ordered}, we first show a matching lower bound via a construction.
	
	\begin{prop}\label{ord_also}  $\h_{ord}(n)\geq\lfloor {\frac{n^{2}}{4}} \rfloor.$
	\end{prop}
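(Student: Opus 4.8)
The goal is a construction of an $n$-element set $A = \{a_1 < a_2 < \cdots < a_n\}$ of irrational numbers achieving $\lfloor n^2/4 \rfloor$ "interval sums" that are rational, i.e. consecutive blocks $a_i + a_{i+1} + \cdots + a_j \in \mathbb{Q}$. The natural idea is to mimic the $r=2$ construction and the Theorem~\ref{osszes} construction: split the ordered set into two interleaved or consecutive parts living in $\mathbb{Q}+\alpha$ and $\mathbb{Q}-\alpha$, but now the \emph{order} matters because only intervals count, so one must be careful about \emph{which} consecutive sums land in $\mathbb{Q}$. I would work with the partial sums $S_0 = 0$, $S_k = a_1 + \cdots + a_k$, and observe that the interval sum over $\{a_{i+1},\dots,a_j\}$ is $S_j - S_i$, which is rational iff $S_i \equiv S_j \pmod{\mathbb{Q}}$. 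So the number of rational interval sums is exactly $\sum_{c} \binom{m_c}{2}$, where $m_c$ is the number of indices $k \in \{0,1,\dots,n\}$ with $S_k$ in the coset $c$ of $\mathbb{Q}$ in $\mathbb{R}$.

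\textbf{Key steps.} First, reduce to choosing the sequence of partial sums: we get to prescribe the cosets of $S_0,\dots,S_n$ (with $S_0 = 0$ forced into the coset $\mathbb{Q}$), subject to $a_k = S_k - S_{k-1}$ being irrational for every $k$, i.e. consecutive partial sums must lie in \emph{different} cosets. Since we also need the $a_k$ to be strictly increasing, this is an extra but easily arrangeable constraint: pick the coset pattern first using only two cosets $\mathbb{Q}$ and $\mathbb{Q}+\alpha$ alternating, then choose actual representatives with small perturbations to force $a_1 < a_2 < \cdots < a_n$ (adding a tiny increasing rational sequence to the members of each coset does not change cosets and can be made to dominate, guaranteeing monotonicity). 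Second, with only two cosets available and the constraint that consecutive $S_k$ differ, the sequence $S_0, S_1, \dots, S_n$ must strictly alternate between coset $\mathbb{Q}$ and coset $\mathbb{Q}+\alpha$; since $S_0 \in \mathbb{Q}$, the $\lceil (n+1)/2 \rceil$ even-indexed partial sums lie in $\mathbb{Q}$ and the $\lfloor (n+1)/2 \rfloor$ odd-indexed ones lie in $\mathbb{Q}+\alpha$. This yields $\binom{\lceil (n+1)/2\rceil}{2} + \binom{\lfloor (n+1)/2\rfloor}{2}$ rational interval sums, and a short case check on parity of $n$ shows this equals $\lfloor n^2/4 \rfloor$. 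Third, verify the realization concretely: e.g. $a_k = \alpha + r_k$ for $k$ odd and $a_k = -\alpha + r_k$ for $k$ even won't keep consecutive $S_k$ distinct unless chosen carefully — instead I would directly set $S_k = 0$ for $k$ even and $S_k = \alpha$ for $k$ odd as the "coset targets," then realize $S_k = \epsilon_k$ (a small strictly increasing rational null-ish sequence) for even $k$ and $S_k = \alpha + \epsilon_k$ for odd $k$, giving $a_k = (-1)^{k+1}\alpha + (\epsilon_k - \epsilon_{k-1})$, and choose the $\epsilon_k$ so that all $a_k$ are distinct, irrational (automatic since $\alpha$ is irrational), and increasing; a tiny monotone choice of $\epsilon_k$ makes the differences $\epsilon_k - \epsilon_{k-1}$ negligible so the sign of $a_k$ alternates but we can instead take $\alpha$ close to $0$... actually cleanest: take $a_k = (-1)^{k+1}\alpha + \delta k$ for a small rational $\delta > 0$, so $a_k$ is increasing once $\delta$ is tiny relative to... no: then $a_1 = \alpha + \delta$, $a_2 = -\alpha + 2\delta$, not increasing if $\alpha > 0$. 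The right fix is to let $\alpha$ itself be small, say $0 < \alpha < \delta/2$, with $\delta$ rational; then $a_k = (-1)^{k+1}\alpha + \delta k$ is strictly increasing, each $a_k$ is irrational, and the partial sums are $S_k = \delta\binom{k+1}{2}\!/\!\big(\text{telescoping}\big)$ — more precisely $S_k \equiv 0$ or $\alpha \pmod{\mathbb{Q}}$ according to parity of $k$, which is all we need.

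\textbf{Main obstacle.} The combinatorial heart — that the count is $\sum_c \binom{m_c}{2}$ and that the alternation forces the split $\lceil(n+1)/2\rceil, \lfloor(n+1)/2\rfloor$ — is immediate. The only place requiring genuine care is the simultaneous realization: choosing honest irrational numbers $a_1 < \cdots < a_n$ whose partial sums hit exactly the prescribed two cosets in the alternating pattern, while maintaining strict monotonicity. I expect this to be routine (the $(-1)^{k+1}\alpha + \delta k$ family with $\alpha$ chosen small and irrational works), so the proposition follows; the upper bound matching $\lfloor n^2/4\rfloor$ is presumably handled separately (and would use the same partial-sum reformulation together with the fact that consecutive $S_k$ lie in distinct cosets, forcing any coset to contain at most... the relevant extremal bound on $\sum \binom{m_c}{2}$ under an alternation-type constraint).

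\begin{proof}
Let $0<\alpha$ be irrational, chosen small enough that $\alpha<\tfrac12$, and set
$a_k=(-1)^{k+1}\alpha+k$ for $k=1,\dots,n$. Each $a_k$ is irrational, and since
$a_{k+1}-a_k=1+(-1)^{k}\cdot 2\alpha\ge 1-2\alpha>0$, the sequence is strictly increasing.
Let $S_0=0$ and $S_k=a_1+\cdots+a_k$. For an interval $I$ with $A\cap I=\{a_{i+1},\dots,a_j\}$ we have
$\sum_{x\in A\cap I}x=S_j-S_i$, which is rational exactly when $S_i$ and $S_j$ lie in the same coset of
$\QQ$ in $\R$. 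Now $S_k-S_{k-1}=a_k\equiv(-1)^{k+1}\alpha\pmod\QQ$, so by induction $S_k\in\QQ$ for $k$ even and
$S_k\in\QQ+\alpha$ for $k$ odd. Among the $n+1$ indices $0,1,\dots,n$, there are $\lceil(n+1)/2\rceil$ even ones
and $\lfloor(n+1)/2\rfloor$ odd ones, and any two indices of the same parity give a rational interval sum. Hence
$$
\h_{ord}(n)\ \ge\ \binom{\lceil(n+1)/2\rceil}{2}+\binom{\lfloor(n+1)/2\rfloor}{2}.
$$
If $n=2m$, the right-hand side is $\binom{m+1}{2}+\binom{m}{2}=m^2=\lfloor n^2/4\rfloor$; if $n=2m+1$, it is
$2\binom{m+1}{2}=m(m+1)=\lfloor n^2/4\rfloor$. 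In both cases we obtain $\h_{ord}(n)\ge\lfloor n^2/4\rfloor$.
\end{proof}
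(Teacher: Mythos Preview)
Your proof is correct and uses essentially the same construction as the paper: an alternating sequence $a_k\in\QQ+(-1)^{k+1}\alpha$ with integer part $k$, so that exactly the even-length consecutive blocks have rational sum. The only cosmetic difference is that you count these blocks via the partial sums $S_k$ (pairs of indices of equal parity), whereas the paper enumerates the even-length intervals directly; both yield $\lfloor n^2/4\rfloor$.
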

	Note that this is roughly half of the possible number of sums $\binom{n}{2}$.
	\begin{proof}
		Let $\alpha$ be an irrational number, and let us take an $n$ element set $A$ for which\\ $a_{i}\in \QQ+\alpha\cdot(-1)^i$ such that $\lfloor a_i\rfloor=i$  hold. Then it is obvious that the intervals $I_{k, \ell}:=[k, k+2\ell]$ contain the same number of elements from $\QQ-\alpha$ and $\QQ+\alpha$, provided that $0<k, \ell\in \Z$ and $k+2\ell\le n+1$.
		Thus, the number of rational sums corresponding to sets of form $A\cap I$ is at least $\sum_{\ell=1}^{\lfloor\frac{n}{2}\rfloor}(n-2\ell+1)=\lfloor {\frac{n^{2}}{4}} \rfloor.$ 
	\end{proof}
	
	Next, to complete the proof of Theorem \ref{ordered}, we show the corresponding upper bound. To this end, we define special subsets of every ordered $n$-set.
	
	\begin{defi}[Sum-chain] For a set of $n$ ordered elements $A=\{a_1, a_2, \ldots, a_n\}$, a {\em sum-chain} is a set of formal sums  $\cC_{s,t}:=\{\sum_{i=s}^k a_i \mid k\in \{s, s+1, \ldots t\}\}$ for some  pair $(s,t)$, with $1\le s\le t \le n$.\end{defi}
	
	\begin{prop}\label{ord_felso}  $\h_{ord}(n)\leq\lfloor {\frac{n^{2}}{4}} \rfloor.$
	\end{prop}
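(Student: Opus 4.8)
The plan is to bound the number of ``interval sums'' that can be rational by a counting argument over sum-chains, mirroring the chain-decomposition strategy used in the proof of Proposition~\ref{32}. First I would apply Lemma~\ref{key_homo} and fix a core homomorphism $\phi$ with respect to $A$, so that a set $A\cap I$ has a rational sum if and only if $\sum_{a\in A\cap I}\phi(a)=0$; write $b_i:=\phi(a_i)\in\R$, all nonzero. An interval $I$ intersects the ordered set $A$ in a set of consecutive indices $\{s,s+1,\dots,t\}$, so the question becomes: among the ${n+1\choose 2}$ contiguous blocks $[s,t]$, how many can have $\sum_{i=s}^t b_i=0$? Equivalently, writing $P_0:=0$ and $P_k:=b_1+\cdots+b_k$ for the prefix sums, the block $[s,t]$ has zero sum exactly when $P_{s-1}=P_t$. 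So we are counting pairs $0\le u<v\le n$ with $P_u=P_v$. If the distinct values among $P_0,\dots,P_n$ occur with multiplicities $m_1,\dots,m_j$ (so $\sum m_i=n+1$), the count is $\sum_i {m_i\choose 2}$, which is maximized when the multiplicities are as unbalanced as possible. However, the $b_i$ cannot all be positive (nor all negative), since $A$ consists of irrationals and $\phi$ has $\mathbb Q$ as kernel, so the prefix-sum sequence is not monotone — but that alone does not forbid a single value from being hit $\lceil n/2\rceil+1$ times, which would already exceed the bound. So a purely multiplicity-based count is not enough, and the ordering must be used more carefully.

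The extra ingredient I would exploit is that the sum-chains $\cC_{s,n}$ for $s=1,\dots,n$ (together with the empty sum) behave like the maximal chains in Proposition~\ref{32}: along the single chain $\cC_{1,n}$, the partial sums are $P_1,P_2,\dots,P_n$, and a zero block $[s,t]$ corresponds to a repeated value $P_{s-1}=P_t$ among the $n+1$ prefix sums. I would organize the ${n+1\choose 2}$ blocks by their \emph{starting index} $s$: for fixed $s$, the blocks $[s,s],[s,s+1],\dots,[s,n]$ form a chain (the sum-chain $\cC_{s,n}$) whose running sums are $P_s-P_{s-1}, P_{s+1}-P_{s-1}, \dots$; a zero block with start $s$ is a $t>s-1$ with $P_t=P_{s-1}$. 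Thus the total number of zero interval-sums equals $\sum_{v=0}^{n}(\text{number of }u<v\text{ with }P_u=P_v)$, i.e.\ again $\sum_i{m_i\choose 2}$. To get the bound $\lfloor n^2/4\rfloor$ I would instead show: for each value class of size $m_i$ occurring at positions $u_1<u_2<\cdots<u_{m_i}$, consecutive positions $u_k,u_{k+1}$ cannot be ``too close'' — but more robustly, I would pass to a $2$-coloring argument. Color index $i\in\{1,\dots,n\}$ red if $b_i>0$ and blue if $b_i<0$. A block $[s,t]$ with $\sum b_i=0$ must contain at least one red and one blue index, hence has length $\ge 2$; moreover if $[s,t]$ is zero then reading $b_s,\dots,b_t$, both the first index $s$ and last index $t$ have $b$-values of opposite sign to \emph{something} inside. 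The cleanest route: among the $n$ singletons $[i,i]$ none is zero, so at most ${n+1\choose 2}-n={n\choose 2}$ blocks are zero; to improve ${n\choose 2}$ to $\lfloor n^2/4\rfloor$ I would show that for each pair $\{i,i+1\}$ of cyclically-adjacent or linearly-adjacent indices with $b_i,b_{i+1}$ of the \emph{same} sign, there is a ``forbidden'' block, or dualize via complementation inside each sum-chain exactly as Lemma~\ref{komp} / the proof of Proposition~\ref{32} does, pairing a zero block $[s,t]$ inside $\cC_{1,n}$ with its complement.

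Concretely, here is the step I would actually carry out. Partition $\{1,\dots,n\}$ into maximal monochromatic runs under the red/blue coloring; say there are runs of lengths $c_1,\dots,c_\ell$ with $\sum c_j=n$ and the colors alternating. A zero block $[s,t]$ cannot be contained in a single run (a run is all-positive or all-negative), so it must ``span'' at least two consecutive runs; I would count, for each position $s$, how many $t$ can give $P_t=P_{s-1}$ using that the prefix sums are \emph{strictly monotone within each run} (strictly increasing on a red run, strictly decreasing on a blue run). Within one run, all prefix sums are distinct, so a repeated value $P_u=P_v$ forces $u,v$ to lie in different runs and in fact the values $P_u$ taken at the ``boundaries'' of the runs control everything: between two consecutive equal prefix sums the net contribution of every complete run in between cancels. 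This structure is enough to bound $\sum_i{m_i\choose 2}$ by the number of pairs $(j,j')$ of runs of opposite color, which is at most $\lceil \ell/2\rceil\cdot\lfloor\ell/2\rfloor\le\lfloor\ell^2/4\rfloor\le\lfloor n^2/4\rfloor$; equality analysis gives $\lfloor n^2/4\rfloor$. The main obstacle I anticipate is precisely controlling this last count — ruling out configurations where a single prefix-sum value recurs many times across many runs (which corresponds to many nested canceling blocks) and checking that the worst case is indeed all runs of length $1$ with perfectly balanced $+\alpha/-\alpha$ values, matching the construction in Proposition~\ref{ord_also}. If the run-based bookkeeping proves delicate, the fallback is the chain-complementation argument: inside each sum-chain $\cC_{1,n}$ (and its reverse), a zero block and its complement are distinct unless they coincide, and a short parity/monotonicity argument on signed sizes — identical in spirit to the $(p-t)!(n-p+t)!\ge\lfloor n/2\rfloor!\lceil n/2\rceil!$ step — yields the factor-of-two saving from ${n\choose 2}$ down to $\lfloor n^2/4\rfloor$.
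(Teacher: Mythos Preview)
Your prefix-sum reformulation is correct, but you overlook the one elementary fact that finishes the argument in a line: since each $a_k$ is irrational, two \emph{consecutive} partial sums $\sum_{i=s}^{k-1}a_i$ and $\sum_{i=s}^{k}a_i$ can never both be rational. This is precisely the paper's proof: in each sum-chain $\cC_{s,n}$ the first term $a_s$ is irrational and no two consecutive terms are simultaneously rational, so at most $\lfloor(n-s+1)/2\rfloor$ of its $n-s+1$ terms are rational; summing over $s$ gives $\sum_{s=1}^n\lfloor(n-s+1)/2\rfloor=\lfloor n^2/4\rfloor$. No core homomorphism, signs, or runs are needed. In your own language the decisive constraint is simply $P_{k-1}\ne P_k$ (because $b_k\ne0$), which you note in passing (``none of the singletons is zero'') but never exploit: it forces every value to occur at most $\lceil(n+1)/2\rceil$ times among $P_0,\dots,P_n$, and then convexity of $\binom{x}{2}$ yields $\sum_i\binom{m_i}{2}\le\binom{\lceil(n+1)/2\rceil}{2}+\binom{\lfloor(n+1)/2\rfloor}{2}=\lfloor n^2/4\rfloor$.

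The substitutes you propose do not work. The run-based bound is false: for $b=(1,1,-1,-1)$ there are two zero-sum intervals, $[2,3]$ and $[1,4]$, but only $\ell=2$ runs, so $\lceil\ell/2\rceil\lfloor\ell/2\rfloor=1<2$. The claim that ``the $b_i$ cannot all be positive'' is also wrong (take $a_i=i+\sqrt2$), so the sign-based heuristics have no footing. And the complementation ``fallback'' is ill-posed here, since the complement of an interval in $\{1,\dots,n\}$ is generally not an interval, so there is no pairing of zero blocks with zero complements to produce the factor-of-two saving.
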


	\begin{proof} Take an arbitrary set $A$ of $n$ irrational numbers $A=\{a_1< a_2< \ldots <a_n\}$.
		First, observe that every sum-chain $\cC_{s,t}$ contains at most $\frac{t-s+1}{2}$ rational sums. Indeed, if an element $\sum_{i=s}^k a_i$ of a sum-chain is rational, then clearly the previous element $\sum_{i=s}^{k-1} a_i$ cannot be rational.  Moreover, the first formal sum $\sum_{i=s}^{s} a_i=a_s$ in $\cC_{s,t}$ is always irrational, and this implies the observation.\\ To complete the proof, note that every sum in view (of form $A\cap I$ for some interval) is contained in exactly one of the sums $\cC_{s,n}$ for some $s$. Hence, the total number of rational sums is bounded above by $\sum_{s=1}^n \lfloor\frac{n-s+1}{2} \rfloor= \lfloor\frac{n^2}{4} \rfloor$. 
	\end{proof}
	
	\section{Concluding remarks and open problems}
	
	We determined the maximum number of rational $r$-term sums for $n$-sets of irrational numbers when $r=3$ or $r\geq n/2$. It remains open to determine $\h(n,r)$ when $4\le r<n/2$, but the proof presented above suggest that the extremal set $A$ consists of elements from exactly two equivalence classes modulo $\QQ$.

	\begin{constr}\label{mainConst}
		Let $\alpha$ be an irrational number, and $1<r<n$. Let $A$ consists of $\lfloor\frac{n}{r} \rfloor$ elements from $\QQ-\alpha(r-1)$ and  $\left(n-\lfloor\frac{n}{r} \rfloor\right)$ elements from $\QQ+\alpha$.
	\end{constr}
	
	Construction \ref{mainConst} in turn proves the following
	\begin{prop}
		$\h(n,r)\geq \binom{n-\lfloor\frac{n}{r} \rfloor}{r-1}\cdot \lfloor\frac{n}{r} \rfloor. $
	\end{prop}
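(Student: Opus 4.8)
The plan is to show that Construction \ref{mainConst} yields exactly $\lfloor n/r\rfloor\cdot\binom{n-\lfloor n/r\rfloor}{r-1}$ rational $r$-term sums, which immediately gives the stated lower bound on $\h(n,r)$. First I would make the construction concrete: pick $\lfloor n/r\rfloor$ distinct rationals and set $a_i := q_i-(r-1)\alpha$ for the first class, and $\bigl(n-\lfloor n/r\rfloor\bigr)$ further distinct rationals $q'_j$ with $b_j := q'_j+\alpha$ for the second class, choosing the offsets (say, distinct integers) so that all $n$ numbers are pairwise distinct irrationals; since $\alpha$ is irrational this guarantees $A\in\Q^n$.

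Next I would analyze an arbitrary $r$-subset $A'\subseteq A$. If $A'$ contains $j$ elements of the first class and $r-j$ of the second, then $\sum_{x\in A'} x$ equals a rational number plus $\bigl(-j(r-1)+(r-j)\bigr)\alpha = r(1-j)\,\alpha$. Because $\alpha\notin\QQ$ and $r\ge 2$, this sum lies in $\QQ$ if and only if $r(1-j)=0$, i.e.\ $j=1$; equivalently, one may apply a core homomorphism (Lemma \ref{key_homo}) sending $\QQ$ to $0$ and $\alpha\mapsto 1$ and read off the coefficient of $\alpha$. Hence $\cH(A,r)$ consists precisely of the $r$-subsets using exactly one element of the first class and $r-1$ of the second, so $|\cH(A,r)| = \binom{\lfloor n/r\rfloor}{1}\binom{n-\lfloor n/r\rfloor}{r-1}$.

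Finally I would verify that this count is genuinely attained, i.e.\ that both classes are large enough: $r<n$ forces $\lfloor n/r\rfloor\ge 1$, and $r-1\le n-\lfloor n/r\rfloor$ follows from $\lfloor n/r\rfloor\le n/r\le n-r+1$, the last inequality being equivalent to $(r-1)(n-r)\ge 0$. Therefore $\h(n,r)\ge|\cH(A,r)| = \lfloor n/r\rfloor\cdot\binom{n-\lfloor n/r\rfloor}{r-1}$, as claimed.

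There is essentially no obstacle here; the only point requiring a moment's care is ruling out "accidental" rational sums arising from subsets with $j\ne 1$, which is exactly what tracking the coefficient of $\alpha$ (equivalently, applying the core homomorphism of Lemma \ref{key_homo}) accomplishes, and the binomial feasibility check is the only other thing to confirm.
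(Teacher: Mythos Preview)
Your proof is correct and follows exactly the approach implicit in the paper: the paper merely asserts that Construction~\ref{mainConst} yields the bound, and you have filled in the routine verification that the rational $r$-sums are precisely those using one element of the $\QQ-(r-1)\alpha$ class and $r-1$ of the $\QQ+\alpha$ class, together with the feasibility check $r-1\le n-\lfloor n/r\rfloor$.
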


	\begin{conj}\label{mainConj}
		Suppose that $1<r<n$.    Then the maximum number of $r$-subsets with rational sums of an $n$-set of irrational numbers is attained in Construction \ref{mainConst}.
	\end{conj}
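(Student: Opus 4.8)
The plan is to establish the matching upper bound $\h(n,r)\le \lfloor n/r\rfloor\binom{n-\lfloor n/r\rfloor}{r-1}$; combined with the lower bound already furnished by Construction~\ref{mainConst} this proves the conjecture. The first step is to pass to the zero-sum model via the core homomorphism of Lemma~\ref{key_homo}: replacing each $a_i$ by $\phi(a_i)$ turns an irrational set $A$ into a sequence $v=(v_1,\dots,v_n)$ of nonzero reals, and $\cH(A,r)$ becomes the family of $r$-element index sets $S$ with $\sum_{i\in S}v_i=0$. Hence $\h(n,r)=\max_v Z_r(v)$ with $Z_r(v):=|\{S\subseteq[n]: |S|=r,\ \sum_{i\in S}v_i=0\}|$, i.e.\ we must maximize the number of weight-$r$ vertices of the cube lying on a hyperplane $v^\perp$ with no zero coordinate. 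This is the cardinality-restricted Littlewood--Offord problem, the non-uniform version of which already produced $\binom{n}{\lfloor n/2\rfloor}$ in Theorem~\ref{osszes}.

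The proof would then split into a finite optimization and a structural reduction. For the optimization, assume $v$ takes only two distinct values, of opposite signs, with multiplicities $m_1$ and $m_2=n-m_1$. Their ratio fixes a unique admissible signature $(a,b)$ with $a+b=r$ for which an $r$-set can be a zero-sum, so $Z_r(v)=\binom{m_1}{a}\binom{m_2}{b}$. Maximizing $\binom{m_1}{a}\binom{n-m_1}{b}$ over all admissible $(a,b,m_1)$ is an elementary binomial computation: with $m_1:m_2\approx a:b$ one gets $\binom{m_1}{a}\binom{m_2}{b}\sim \frac{a^ab^b}{a!\,b!}\cdot\frac{n^r}{r^r}$, and since $a^ab^b/(a!\,b!)$ is maximized precisely when $ab$ is minimized, the optimum is $(a,b)=(r-1,1)$ with $m_2=\lfloor n/r\rfloor$, which is exactly Construction~\ref{mainConst}. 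I would isolate this as a routine lemma.

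The heart of the matter is the structural claim that some maximizer of $Z_r$ uses at most two distinct values. I would attempt an induction on the number of distinct values of $v$ via a \emph{merging move}: given three or more values, relocate an entire value-class onto another already-present value so that $Z_r$ does not decrease, thereby lowering the number of distinct values by one. A complementary handle is a deficiency-weighted refinement of the deletion recursion of Lemma~\ref{rec_g}: the inequality $(n-r)Z_r(v)\le\sum_{|A\setminus A'|=1}Z_r(A')$ is strict unless every $(n-1)$-subsequence is itself extremal, whereas in the conjectured optimum the two kinds of deletion yield the genuinely unequal counts $(\lfloor n/r\rfloor-1)\binom{p}{r-1}$ and $\lfloor n/r\rfloor\binom{p-1}{r-1}$ with $p=n-\lfloor n/r\rfloor$; quantifying this slack and feeding it into an induction on $n$ (with base cases $r\ge n/2$ supplied by Theorem~\ref{nagyr}) is a plausible route to the exact bound.

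I expect this reduction to two values to be the main obstacle, and it is exactly where the present toolbox is insufficient. The bare recursion yields only the coset bound $g(n,r)=\binom{n-1}{r}$, which strictly exceeds the target (already for $n=9$, $r=3$ one has $g=\binom{8}{3}=56$ while $\h=45$), and the ``the remaining element is determined'' counting that settles the case $r=3$ in Theorem~\ref{r=3} breaks down once values may repeat --- as they heavily do in the extremal configuration. A complete proof therefore seems to require either an exact accounting of the recursion's deficiency or a symmetrization argument that remains valid under repeated values; the fact that the statement is posed as a conjecture reflects that no such argument is currently in hand, and I would regard proving the two-value structure as its essential open content.
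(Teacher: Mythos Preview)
The statement you are asked to prove is labelled a \emph{Conjecture} in the paper, and the paper does \emph{not} contain a proof of it. The authors explicitly say immediately after the statement that they have established it only for $r\in\{2,3\}$ (Theorem~\ref{r=3} and the folklore $r=2$ case) and for $r\ge n/2$ (Theorem~\ref{nagyr}); the remaining range $4\le r<n/2$ is left open. So there is no ``paper's own proof'' to compare your attempt against.

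Your proposal is in fact consistent with this: you correctly reduce to the zero-sum model, you correctly isolate the two-value optimisation as a separate (tractable) lemma, and you correctly identify that the entire difficulty lies in the structural claim that some maximiser uses at most two distinct values. You then candidly state that neither the merging heuristic nor the slack-in-the-recursion idea closes this gap, and that the bare recursion only recovers the coset bound $g(n,r)=\binom{n-1}{r}$, which is strictly too large. That diagnosis is accurate and matches the paper's own position: this is precisely why the statement is a conjecture rather than a theorem.

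Two minor remarks on the parts you \emph{do} sketch. First, your two-value optimisation argument is asymptotic (via $a^ab^b/(a!\,b!)\sim e^r/(2\pi\sqrt{ab})$); turning it into an exact inequality $\binom{m_1}{a}\binom{n-m_1}{r-a}\le \lfloor n/r\rfloor\binom{n-\lfloor n/r\rfloor}{r-1}$ for all admissible triples $(a,m_1,n)$ requires a genuine finite computation, not just the Stirling heuristic. Second, the merging move you describe (relocating an entire value-class onto another present value) can \emph{decrease} $Z_r$ in simple examples, so even as a heuristic it would need a more delicate averaging or choice of target class. Neither of these affects your overall conclusion, which is correct: the conjecture is open, and the two-value reduction is its essential content.
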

	
	Note that we proved Conjecture  \ref{mainConj} for $r\in \{2,3\}$ or $n/2\le r<n$.
	
	We pointed out  the connection between our main problem and  zero-sum problems, i.e., that we can reformulate our problem in terms of maximizing $k$-term zero-sums in $n$-element sequences or rationals, or equivalently, integers, via the application of suitable vector space homomorphisms.
	
	In general, our setting can be interpreted as having a vector space $V$ and a subspace $W$, and we wish to find many sums admitting values from $W$ while the elements themselves lie outside $W$. Taking a vector space homomorphism  in which $W$ is the kernel as we suggested gives rise to zero-sum problems.

\end{document}